\newcommand{\supp}{\mathop{\rm supp}}
\def\csect#1{Section~\ref{#1}}
\def\capp#1{Appendix~\ref{#1}}
\def\csects#1#2{Sections~\ref{#1} and \ref{#2}}
\let\Prp=\Pr \def\Pr{\Prp\nolimits}
\newcommand{\be}{\begin{equation}}
\newcommand{\ee}{\end{equation}}
\newtheorem{theorem}{Theorem}
\newtheorem{lemma}[theorem]{Lemma}
\theoremstyle{remark}
\newtheorem{question}[theorem]{Question}
\newtheorem{remark}[theorem]{Remark}
\newcommand{\cthm}[1]{Theorem~\ref{#1}}
\newcommand{\clem}[1]{Lemma \ref{#1}}
\newcommand{\cfig}[1]{Figure~\ref{#1}}
\newcommand{\crem}[1]{Remark~\ref{#1}}
\newcommand{\cqes}[1]{Question~\ref{#1}}
\def\ds{\displaystyle}
    \def\bbz{\mathbb{Z}}
    \def\bbr{\mathbb{R}}
    \def\bbn{\mathbb{N}}
\def\M{{\bf M}}\def\L{{\bf L}}
\def\C{{\cal C}}
\def\P{{\cal P}}
\def\Q{{\cal Q}}
\def\muhat{\hat\mu}\def\rhohat{\hat\rho}
\def\murho{\mu^{(\rho)}}\def\mubar{\bar\mu^{(\rho)}}
\def\rhobar{\bar\rho}
\def\IN{I^{(N)}}
\def\AN{A^{(N)}}\def\mN{m^{(N)}}\def\kn{k^{(N)}}
\def\JN{J^{(N)}}\def\jnn{j^{(N)}}
\def\zetan{\zeta^{(N)}}\def\etan{\eta^{(N)}}
\def\PsiN{\Psi^{(N)}}
\def\psiNn{\psi^{(N,n)}}\def\psiNN{\psi^{(N,\mN)}}
\def\interiorM{\rlap{\raise9pt\hbox to9.5pt{\hss$\scriptscriptstyle\circ$}}M}
\def\P{{\cal P}}\def\L{{\cal L}}
\def\M{{\cal M}}\def\T{{\cal T}}\def\To{{\overline{\cal T}}}
\def\t{{\overline t}}
\def\t{{\overline t}}
\def\Xb{\ttttt X}
\def\Xb{\widehat X}
\def\ns{{\#}}
\long\def\kill#1\endkill{\relax}
\def\0{{\it0}}\def\1{{\it1}}\def\2{{\it2}}\def\3{{\it3}}
\def\rng#1#2{\hbox{$(#1\!:\!#2)$}}
\newdimen\mbsize \mbsize=\hsize \multiply\mbsize by 17  \divide\mbsize by 20
\def\Q{{\cal Q}}
\def\P{{\bf P}}\def\Pp{{\bf P}_p}\def\Pmu{{\bf P}_p^\mu}
\def\Pmurho{{\bf P}_p^{\murho}}\def\Prho{{\bf P}_p^{(\rho)}}
\def\Prhop{{\bf P}_{p'}^{(\rho)}}
\def\Pmuhato{\overline{\P}^{\muhat}_p}
\def\Po{\overline{\P}_p}\def\Omegao{\overline{\Omega}}
\def\omegao{\overline{\omega}}
\def\lambdao{\overline{\lambda}_p}
\def\zetao{\bar\zeta}
\def\XN{X^{(N)}}\def\XL{\{0,1\}^{\bbz_L}}\def\VN{V^{(N)}}
\def\muN{\mu^{(N)}}\def\OmegaN{\Omega^{(N)}}
\def\PN{{\bf P}_p^{(N)}}\def\PNn{{\bf P}^{(N,n)}_p}
\def\Pn{{\bf P}^{(n)}_p}
\def\PNnn{{\bf P}^{(N,n+1)}_p}\def\Pk{{\bf P}^{(N,k)}_p}
\def\Pkm{{\bf P}^{(|\pi_m\zeta|,k_m)}_p}
\numberwithin{equation}{section}
\numberwithin{theorem}{section}
\def\rf#1{(P\ref{prop#1})}
\def\ind{{\bf1}}
\begin{document}

\title{\vskip-0.2truein
Stationary States of the One-dimensional Facilitated Asymmetric Exclusion
Process}
\author{A. Ayyer\footnote{Department of Mathematics, 
Indian Institute of Science, Bangalore, 560 012, India.}, 
S. Goldstein\footnote{Department of Mathematics,
Rutgers University, New Brunswick, NJ 08903.},
J. L. Lebowitz\footnotemark[1],
\footnote{Also Department of Physics, Rutgers.}\ \ 
and E. R. Speer\footnotemark[1]}
\date{December 30, 2021}
\maketitle

\noindent{\raggedright {\bf Keywords:} Asymmetric facilitated exclusion
processes, one dimensional conserved lattice gas, facilitated jumps,
translation invariant steady states, asymmetry independence, F-ASEP,
F-TASEP}

\par\medskip\noindent
 {\bf AMS subject classifications:} 60K35, 82C22, 82C23, 82C26

\begin{abstract}We describe the translation invariant stationary states
(TIS) of the one-dimensional facilitated asymmetric exclusion process in
continuous time, in which a particle at site $i\in\bbz$ jumps to site
$i+1$ (respectively $i-1$) with rate $p$ (resp.~$1-p$), provided that
site $i-1$ (resp.~$i+1$) is occupied and site $i+1$ (resp.~$i-1$) is
empty.  All TIS states with density $\rho\le1/2$ are supported on trapped
configurations in which no two adjacent sites are occupied; we prove that
if in this case the initial state is i.i.d.~Bernoulli then the final state
is independent of $p$.  This independence also holds for the system on a
finite ring.  For $\rho>1/2$ there is only one TIS.  It is the infinite
volume limit of the probability distribution that gives uniform weight to
all configurations in which no two holes are adjacent, and is isomorphic
to the Gibbs measure for hard core particles with nearest neighbor
exclusion.\end{abstract}

\section{Introduction\label{intro}}

The {\it facilitated exclusion process} is a model of particles moving on
a lattice, which we take to be $\bbz^d$.  Our primary interest is in the
one-dimensional version in which particles hop only to nearest neighbor
sites, but for completeness we first describe the general model.  A
configuration of the model is an arrangement of particles on $\bbz^d$,
with each site either empty or occupied by a single particle.  If site
$i$ is occupied, {\it and one of its neighboring sites is also}, then the
particle at site $i$ attempts, at rate 1, to jump to another site $j$,
succeeding only if the target is unoccupied.  The target site $j$ is
chosen with probability $\pi(j-i)$, where $\pi:\bbz^d\to\bbr_+$ is some
probability distribution: $\pi\ge0$ and $\sum_{j\in\bbz^d}\pi(j)=1$.

We will generally consider states of the system---probability measures on
the set of configurations---which have a well-defined density $\rho$, in
the sense that, with probability one, a fraction $\rho$ of the sites in
each configuration are occupied (see \eqref{rhodef} for a precise
statement).  (Here, and throughout unless stated otherwise, by
``measure'' we mean ``probability measure.'')  Since particles are
neither created nor destroyed, the density is a conserved quantity.  If
$\rho$ is not too large there will exist {\it frozen} configurations in
which no two adjacent sites are occupied and hence no particle can move;
the maximum density of such a frozen configuration is clearly 1/2.

Most studies of the model with $d\ge2$ consider the case in which the
target sites are uniformly distributed over the nearest neighbors of the
jumping particle.  For $d=2$, simulations \cite{hl,mcl,rpv} suggest a
somewhat surprising property of the model (which presumably holds for
$d\ge2$): there is a critical density $\rho_c<1/2$ such that, if the
initial state of the models is the measure with Bernoulli i.i.d.~marginals
(which we will refer to simply as {\it Bernoulli measure}) with density
$\rho$, then with probability 1, (i)~for $\rho<\rho_c$ the model
eventually reaches a frozen configuration, while (ii)~for $\rho>\rho_c$
the configuration remains active---that is, particles continue to
jump---for all time.  Note that when $\rho_c<\rho\le1/2$ there exist
frozen configurations with density $\rho$; these are traps for the
dynamics, but with probability 1 they are avoided.  To obtain such a
result rigorously, or indeed any interesting rigorous results, seems very
challenging (but see \cite{ST}).  Indeed, we are not able to prove what
seems to be self evident: that the configurations eventually freeze for
sufficiently small $\rho$, say $\rho<10^{-23}$.

In the remainder of this paper we consider only the case $d=1$, with
probabilities $p$ and $1-p$ of jumps to the right and left, respectively
(that is, we take $\pi(1)=p$, $\pi(-1)=1-p$, and $\pi(j)=0$ for
$j\ne\pm1$).  See \cfig{fig:trans}.  This model is the {\it Facilitated
Asymmetric Simple Exclusion Process} (F-ASEP), with special cases $p=0,1$
({\it Totally Asymmetric}, the F-TASEP) \cite{bbcs,BM,CZ,gkr,GR} and
$p=1/2$ ({\it Symmetric}, the F-SSEP) \cite{BESS,Oliveira}.  A discrete
time version of the F-TASEP was studied in \cite{GLS1,GLS2}.  We write
$X=\{0,1\}^\bbz$ for the configuration space; the condition that
configuration $\eta$ have density $\rho$ is now
 \be\label{rhodef}
\lim_{N\to\infty}\frac1N\sum_{i=1}^N\eta_i
 = \lim_{N\to\infty}\frac1N\sum_{i=-N}^{-1}\eta_i = \rho,
 \ee
and we let $X_\rho$ denote the set of such configurations.  We let
$F\subset X$ denote the set of frozen configurations.

  \begin{figure}[ht!]\begin{center} \label{fig:trans}
\includegraphics[scale=1.5]{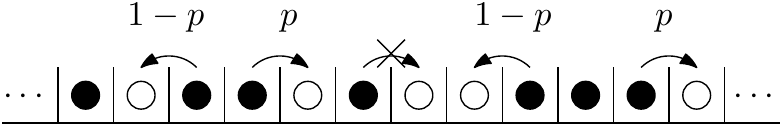}
\caption{Transitions in the one-dimensional F-TASEP}
 \end{center}\end{figure}

We will study the translation invariant (TI) measures on $X$ which are
stationary for the dynamics (TIS measures); for this purpose it suffices
to consider extremal TIS measures, that is, the set of measures such that
every TIS measure is a convex combination of these, and none of these is
a proper convex combination of others.  As we show below, each extremal
measure will be supported on $X_\rho$ for some value of $\rho$.  The
stationary measures for the symmetric case were discussed, for finite
volume, in \cite{Oliveira}; the results given there carry over smoothly
to infinite volume.  The current paper contains two main results, one
each for low density ($0<\rho<1/2)$ and high density ($1/2<\rho<1$),
discussed in \csects{lowrho}{highrho}, respectively.

 For $0<\rho<1/2$, the set of TIS states is simple: all such states are
frozen, that is, are supported on $F$, and all TI measures on $F$ are TIS
measures.  In this case we pose and answer the question: if the initial
state is Bernoulli, what is the final state?  Our main result is that
this final state is independent of the asymmetry parameter $p$.
Moreover, it is also the final state arising from an initial Bernoulli
measure for the discrete-time F-TASEP, in which all particles attempt to
jump at the same times \cite{GLS1,GLS2}.  However, the independence of
the degree of asymmetry does not hold for discrete-time dynamics
\cite{GLS3}.


For $1/2<\rho<1$ we prove that there is a unique TIS state for each value
of $\rho$.  This state may in fact be identified as the Gibbs measure for
a statistical mechanical system in which the only interaction is an
exclusion rule that forbids two adjacent empty sites.  Results
identifying stationary states of particle systems with Gibbs measures
have been established earlier \cite{Georgii,Van}, but under the
assumption that the transition rates for the particle system satisfy a
detailed balance condition, which those of the F-ASEP do not (unless
$p=1/2$).  

Our proof of the uniqueness relies on a coupling with the well-studied
Asymmetric Simple Exclusion Model (ASEP).  The TIS states of density
$\rhohat$ for the latter are precisely the Bernoulli measures
\cite{Liggett} (if $p\ne1/2$ there are also non-TI stationary states,
which are not relevant for our discussion here).  Our coupling yields a
correspondence between these states and the TIS states of the F-ASEP with
density $\rho=1/(2-\rhohat)$.

Our coupling, from the ASEP to the F-ASEP, is a bit more complicated than
the simple map, from the F-ASEP to the ASEP, used in \cite{bbcs}. That map
requires that the particles be labeled, and translation invariant
measures for labeled particle configurations can't be normalized. Thus
the map in \cite{bbcs} is not well suited for our problem of finding the
translation invariant stationary probability measures for the F-ASEP. We
therefore use a coupling that does not require labeling.

\section{The model\label{models}}

In this section we give various definitions and simple results which will
be needed later, and first mention several pieces of general notation.
For any sets $A$ and $B$, function $f:A\to B$, and measure $\lambda$ on
$A$ we let $f_*\lambda$ be the measure on $B$ with
$(f_*\lambda)(C)=\lambda(f^{-1}(C))$; moreover, if $B=\bbr$ we let
$\lambda(f)=\int_Af\,d\lambda$ denote the expected value of $f$ under
$\lambda$.  When $C\subset B$ we let $\ind_C:B\to\{0,1\}$ denote the
indicator function of the set $C$; we will omit mention of the set $B$
when this is clear from the context, as it usually is.  If $S$ is a
finite set then $|S|$ denotes the size of $S$.

We now turn to the models under consideration.  As indicated above, the
configuration space of the F-ASEP model is $X:=\{0,1\}^{\bbz}$.  (In
\csect{fvasep} we will consider also the same dynamics on a ring of $L$
sites with periodic boundary conditions; any notation specific to that
case will be introduced as needed).  Let $F\subset X$ denote the set of
frozen configurations in which no two adjacent sites are occupied, and
similarly let $G\subset X$ denote the set of configurations with no two
adjacent empty sites.  We write $\eta=(\eta(i))_{i\in\bbz}$ for a typical
configuration, and for $j,k\in\bbz$ with $j\le k$ we let
$\eta\rng{j}{k}=(\eta(i))_{j\le i\le k}$ denote the portion of the
configuration lying between sites $j$ and $k$ (inclusive).  We will
occasionally use string notation for configurations or partial
configurations, writing for example
$\eta\rng04=\eta(0)\cdots\eta(4)=10100=(10)^20$.

We denote by $\tau$ the translation operator: if $\eta\in X$ then
$(\tau\eta)(i)=\eta(i-1)$, if $f$ is any function on $X$ then
$\tau f(\eta)=f(\tau^{-1}\eta)$, and if $\mu$ is a (Borel) measure on $X$
then $\tau$ acts on $\mu$ as $\mu\mapsto\tau_*\mu$.  We let $\L(X)$
denote the space of functions $f:X\to\bbr$ for which $f(\eta)$ depends on
the values $\eta(k)$ for only finitely many sites $k$, $\C(X)$ denote the
space of real-valued continuous functions on $X$ (for the topology
generated by $\L(X)$), and $\M=\M(X)$ denote the space of translation
invariant probability measures on $X$ (equipped with the Borel
$\sigma$-algebra, i.e., the natural product $\sigma$-algebra, on $X$).
  
We now turn to a formal specification of the system.  The dynamics is
controlled by {\it Site Associated Poisson Processes} (SAPPs); two of
these, controlling rightward and leftward jumps, respectively, are
associated with each site $i\in\bbz$.  Specifically, given a
TI measure $\mu\in\M$ which specifies the initial distribution of the
system, we consider the probability space $(\Omega,\Pmu)$:
 \be\begin{aligned}\label{OmegaP}
\Omega&=X\times\Omega_0,\quad\text{with}\quad
 \Omega_0=\prod_{i\in\bbz}\bigl(\T^{(i,r)}\times\T^{(i,l)}\bigr),\\
  \Pmu&=\mu\times\Pp,\quad\text{with}\quad
  \Pp=\prod_{i\in\bbz}\bigl(\lambda_p^{(i,r)}\times\lambda_p^{(i,l)}\bigr).
 \end{aligned}\ee
 Here, for $i\in\bbz$ and $\#=l$ or $r$,
 \be\label{SAPP}
 \T^{(i,\#)}=\Bigl\{\bigl((i,t^{(i,\#)}_j)\bigr)_{j=1,2,\ldots}\,\Big|\,
\text{$0<t^{(i,\#)}_1<t^{(i,\#)}_2\cdots$,
$\lim_{j\to\infty}t^{(i,\#)}_j=\infty$}\Bigr\},
 \ee
 and under $\lambda_p^{(i,r)}$ (respectively $\lambda_p^{(i,l)}$) the points
of $\T^{(i,r)}$ (respectively $\T^{(i,l)})$ are distributed as a Poisson
process of rate $p$ (respectively $1-p$.)


The configuration now evolves as follows: at each time $t=t^{(i,r)}_j$ a
particle jumps from site $i$ to site $i+1$ if
$\eta_{t-}(i-1)=\eta_{t-}(i)=1-\eta_{t-}(i+1)=1$, and at each time
$t=t^{(i,l)}_j$ a particle jumps from $i$ to $i-1$ if
$\eta_{t-}(i+1)=\eta_{t-}(i)=1-\eta_{t-}(i-1)=1$.  This so-called {\it
Harris graphical construction} (or {\it Poisson construction} in
\cite{Swart}) leads \cite{Swart} to a process $\eta_t$, well-defined on
$\Omega$, with generator $L$ which acts on $\L(X)$ via
 \be\label{generator}\begin{aligned}
 Lf(\eta)&=\sum_{i\in\bbz}c(i,\eta)[f(\eta^{i,i+1})-f(\eta)].
 \end{aligned}\ee
 Here $\eta^{i,j}$ denotes the configuration $\eta$ with the values of
$\eta(i)$ and $\eta(j)$ exchanged.  The rates $c(i,\eta)$ are given by
 \be\label{rates}
c(i,\eta)= \begin{cases}
   p,&\hbox{if $\eta(i-1)=\eta(i)=1$ and $\eta(i+1)=0$,}\\
  1-p,&\hbox{if $\eta(i)=0$ and $\eta(i+1)=\eta(i+2)=1$,}\\
 0,&\hbox{otherwise.}
 \end{cases}\ee
 $L$ is the generator of a Markov semigroup $S(t)=e^{Lt}$ on $\C(X)$,
which thus acts on $\M(X)$ via $\mu\to\mu_t=\mu S(t)$, where
$(\mu S(t))(f)=\mu(S(t)f)$ or equivalently
$(\mu S(t))(A)=\int_XQ_t(\eta,A)\,d\mu$, with
$Q_t(\eta,A)=(S(t){\bf1}_A)(\eta)$ the transition kernel of the Markov
process.  We will assume that this process, and others to be considered
later, have right-continuous sample paths.

\begin{remark}\label{construction}Since the set of all Poisson times for
different sites will a.s.~be dense in $(0,\infty)$, one cannot perform
all the particle jumps in temporal order, and some care is needed to show
that the construction is well defined.  Details are given in
\cite[Sections 4.3 and 4.4]{Swart}.  Later we carry out such a
construction with a different but equivalent definition of the dynamics,
in which the Poisson times at which particles can jump are associated
with the particles rather than with the sites ({\it Particle Associated
Point Processes}).  See the proof of \cthm{coupling}.\end{remark}

If $\mu$ is a TI measure on $X$  then, by the ergodic theorem,
$\mu$-almost every configuration $\eta$ has a density, i.e., 
 \be\label{density}
r(\eta)=\lim_{N\to\infty}\frac1{2N+1}\sum_{i=-N}^N\eta(i)
 \ee
 exists almost surely.  \eqref{density} defines a map $r:X\to[0,1]$; we
will say that a TI measure $\mu$ {\it has density $\rho$} if
$r(\eta)=\rho$ $\mu$-a.s.  Note that if $\mu$ has density $\rho$ then
$\mu(\eta(i))=\rho$ for any $i$, but that the former is a stronger
statement, ruling out, for example, the possibility that $\mu$ is a
superposition of measures of different densities.  The next lemma shows
that in seeking to describe the set of all stationary TI measures $\mu$
it suffices to consider those for which $\mu(F)$ is 0 or 1 and for which
$r(\eta)$ is $\mu$-a.s.~constant.

\begin{lemma}\label{densities} Every TIS measure $\mu$ on $X$ is a convex
combination of TIS measures for which $r$ is a.s.~constant and $F$ has
measure 0 or 1.  \end{lemma}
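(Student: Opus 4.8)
The plan is to obtain the decomposition in two stages, disintegrating $\mu$ first over the value of the density $r$ and then over the frozen set $F$. The two stages work because of complementary invariances: $r\circ\tau=r$ and $\tau F=F$, while the dynamics conserves particle number, so that $r(\eta_t)=r(\eta_0)$ almost surely, and $F$ consists of fixed points and is therefore absorbing. These are exactly the properties that force each stage to output TIS measures.

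For the first stage, set $\nu=r_*\mu$, a measure on $[0,1]$, and let $\mu=\int_{[0,1]}\mu_\rho\,d\nu(\rho)$ be the disintegration of $\mu$ over $r$, so that $\mu_\rho$ is concentrated on $\{r=\rho\}$; this exists and is $\nu$-a.e.\ unique because $X$ is a compact metric (hence standard Borel) space. Translation invariance of the $\mu_\rho$ follows from uniqueness: since $r\circ\tau=r$ and $\tau_*\mu=\mu$, the family $(\tau_*\mu_\rho)$ is again a disintegration of $\mu$ over $r$, whence $\tau_*\mu_\rho=\mu_\rho$ for $\nu$-a.e.\ $\rho$. For stationarity I would exploit the identity $S(t)[\,g(r)f\,]=g(r)\,S(t)f$, valid for bounded measurable $g$ on $[0,1]$ and $f\in\C(X)$ precisely because $r$ is conserved along trajectories; together with the stationarity of $\mu$ this gives
\begin{align*}
\int g(\rho)\,\mu_\rho(S(t)f)\,d\nu(\rho)
&=\mu\bigl(g(r)\,S(t)f\bigr)=\mu\bigl(S(t)[g(r)f]\bigr)\\
&=\mu\bigl(g(r)f\bigr)=\int g(\rho)\,\mu_\rho(f)\,d\nu(\rho).
\end{align*}
As $g$ is arbitrary, $\mu_\rho(S(t)f)=\mu_\rho(f)$ for $\nu$-a.e.\ $\rho$. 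To promote this to genuine stationarity of $\mu_\rho$ I would fix countable dense families of functions $f\in\C(X)$ and of times $t$, obtain a single $\nu$-null exceptional set off of which the equality holds for all of them, and then extend to arbitrary $f$ and $t$ by right-continuity of $t\mapsto S(t)f$ and dominated convergence. Each surviving $\mu_\rho$ is then TIS with $r\equiv\rho$.

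For the second stage I apply to each $\mu_\rho$ the splitting along $F$: with $a=\mu_\rho(F)$, write $\mu_\rho=a\,\mu_{\rho,F}+(1-a)\,\mu_{\rho,F^c}$, the normalized restrictions of $\mu_\rho$ to $F$ and to $X\setminus F$ (dropping either term if $a\in\{0,1\}$). Because $\tau F=F$, both restrictions are translation invariant. The measure $\mu_{\rho,F}$ is stationary since it is supported on frozen configurations, which are fixed points, so that $S(t)f=f$ holds $\mu_{\rho,F}$-a.s.; the measure $\mu_{\rho,F^c}$ is then stationary by subtraction, as $\mu_\rho S(t)=\mu_\rho$ and $\mu_{\rho,F}S(t)=\mu_{\rho,F}$ force $\mu_{\rho,F^c}S(t)=\mu_{\rho,F^c}$. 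Substituting this splitting into $\mu=\int\mu_\rho\,d\nu(\rho)$ exhibits $\mu$ as a convex combination (integral) of TIS measures, each with $r$ a.s.\ constant and $F$-measure $0$ or $1$, which is the assertion.

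The routine parts—existence and uniqueness of the disintegration, the translation invariance arguments, and the easy $F$-splitting—are standard. The main obstacle is the stationarity of the density components: establishing the conservation identity $S(t)[g(r)f]=g(r)S(t)f$ rigorously (i.e.\ that the spatially averaged density in \eqref{density} is genuinely unchanged in finite time, the net boundary flux across $\pm N$ being $o(N)$), and then controlling the dependence of the exceptional $\nu$-null set on $t$ and $f$ so that a single null set serves for all of them simultaneously.
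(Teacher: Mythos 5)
Your proposal is correct and follows essentially the same route as the paper: disintegrate $\mu$ over the conserved density $r$ via a regular conditional distribution, deduce translation invariance and stationarity of the fibers $\mu_\rho$ from the ($\nu$-a.s.) uniqueness of that disintegration together with conservation of particle number, and then split each $\mu_\rho$ along the invariant set $F$. Your dual formulation via the identity $S(t)[g(r)f]=g(r)S(t)f$ and your explicit handling of the $\nu$-null set (countable dense families in $f$ and $t$, then continuity of $t\mapsto S(t)f$) merely spell out what the paper compresses into ``stationarity of $\mu_\rho$ $\nu$-a.s.\ follows similarly,'' so the two arguments are the same in substance.
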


\begin{proof} Let $\nu=r_*\mu$; $\nu$ is a measure on $[0,1]$ which gives
the distribution of the density under $\mu$.  Then (see for example
\cite{Kallenberg}) there exists a regular conditional probability
distribution for $\mu$, that is, a family $\{\mu_\rho\mid \rho\in[0,1]\}$
of probability measures on $X$ such that $\mu_\rho$ has density $\rho$
and for any measurable $A\subset X$,
 \be
\mu(A)=\int_{0\le \rho\le 1}\mu_\rho(A)d\nu(\rho).
 \ee
 Moreover, $\{\mu_\rho\}$ is unique in the sense that for any other such
family $\{\mu'_\rho\}$, $\mu_\rho=\mu'_\rho$ $\nu$-a.s. If we further
write $\mu_\rho=\mu_\rho\big|_F+\mu_\rho\big|_{X\setminus F}$ we obtain,
after normalization, the desired representation.  It remains to verify
that these normalized measures,
$\mu_\rho\big|_F\big/\bigl(\mu_\rho\big|_F(X)\bigr)$ and
$\mu_\rho\big|_{X\setminus
F}\big/\bigl(\mu_\rho\big|_{X\setminus F}(X)\bigr)$, are TIS measures.

Now
 \be
\mu(A)=\tau_*\mu(A)=\int_{0\le \rho\le 1}\tau_*\mu_\rho(A)d\nu(\rho),
 \ee
  and from \eqref{density} it follows that $\tau_*\mu_\rho$ has density
$\rho$, so that the uniqueness of the conditional probability
distribution implies that $\tau_*\mu_\rho=\mu_\rho$ $\nu$-a.s.
Stationarity of $\mu_\rho$ $\nu$-a.s.~follows similarly from the fact
that neither dynamics destroys or creates particles.  Finally,
translation invariance and stationarity of $\mu_\rho\big|_F$ and
$\mu_\rho\big|_{X\setminus F}$ follows from the fact that $F$ is
translation invariant and invariant under the dynamics.  \end{proof}

The key idea in the next lemma appears in \cite{Oliveira} in the context
of a system on a ring.

\begin{lemma}\label{exclusive} If $\mu$ is a TIS measure on $X$ then
$\mu(F\cup G)=1$.  \end{lemma}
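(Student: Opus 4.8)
The plan is to show that the complement of $F\cup G$, the set $B$ of configurations containing \emph{both} a pair of adjacent occupied sites (``11'') \emph{and} a pair of adjacent empty sites (``00''), has $\mu$-measure zero. I would begin with a reduction. The events $\{\exists\,i:\eta(i)=\eta(i+1)=1\}$ and $\{\exists\,i:\eta(i)=\eta(i+1)=0\}$ are countable unions over $i$ of translates of a single cylinder event, so if either has positive measure then, by translation invariance, already $\mu(\{\eta(0)=\eta(1)=1\})>0$ (resp.\ $\mu(\{\eta(0)=\eta(1)=0\})>0$); thus $\mu(B)>0$ forces a positive density of ``11'' and of ``00''. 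Passing to a spatially ergodic component on which $B$ still has positive measure (such a component inherits stationarity, as in the proof of \clem{densities}), I may assume that $\mu$-a.s.\ there are infinitely many ``11'''s and infinitely many ``00'''s, interleaved along $\bbz$.

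The key monotonicity, following the idea of \cite{Oliveira}, is that no jump ever increases the number of ``00'' pairs: a rightward jump acts locally as $1\,1\,0\,b\mapsto 1\,0\,1\,b$, changing the ``00''-count by $-\ind[b=0]$, and a leftward jump as $a\,0\,1\,1\mapsto a\,1\,0\,1$, changing it by $-\ind[a=0]$. The translation-invariant infinitesimal version follows by applying $L$ to $g(\eta)=(1-\eta(0))(1-\eta(1))$: only the bonds adjacent to $\{0,1\}$ contribute, and a direct computation gives $\mu(Lg)=-p\,\mu(\text{``1100''})-(1-p)\,\mu(\text{``0011''})$, where $\mu(\text{``}w\text{''})$ is the probability of seeing the word $w$ (position irrelevant by invariance). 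Stationarity forces $\mu(Lg)=0$, and since both terms are nonpositive I conclude, for $0<p<1$, that $\mu(\text{``1100''})=\mu(\text{``0011''})=0$ (for $p=0$ or $p=1$ one of the two relations survives, which is all that is needed). As these are again cylinder events, $\mu$-a.s.\ the configuration contains no ``1100'' and no ``0011'', and by stationarity ($\mu_t:=\mu S(t)=\mu$) this persists at every time $t$.

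I would then derive a contradiction from the coexistence of defects. A ``11'' is a mobile defect, since the allowed transitions $1101\leftrightarrow1011$ translate it by one site, and likewise for ``00''; an annihilation—precisely the creation of a ``1100'' or ``0011'', and the only event that strictly lowers the ``00''-count—occurs exactly when a ``11'' and a ``00'' become adjacent. Under the Harris construction, starting from a $\mu$-typical configuration, on which (by the reduction) the two defect types occur at positive density and hence interleave, the defect motion brings some ``11'' adjacent to some ``00'' within a finite time with positive probability, creating a ``1100'' or ``0011''. Hence $\mu_t(\exists\,\text{``1100''}\text{ or }\text{``0011''})>0$ for some $t>0$, contradicting $\mu_t=\mu$ together with the second paragraph. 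Therefore $\mu(B)=0$, i.e.\ $\mu(F\cup G)=1$.

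The main obstacle is making this collision step rigorous in infinite volume: one must quantify the random-walk-like motion of the ``11'' and ``00'' defects under the graphical construction and bound below the probability of forming an adjacent ``11''--``00'' pair by a fixed time, using the positive density of each type to supply a nearby partner. On a finite ring this step is clean—$n_{00}$ is a bounded, non-increasing, integer-valued functional, hence constant on each recurrent class of the finite-state chain, and one checks combinatorially that any class containing a coexistence configuration also contains an annihilation-ready ``1100''/``0011'' configuration, forcing a strict decrease and a contradiction; this is the setting of the original argument in \cite{Oliveira}. A secondary point to verify is that the spatially ergodic components used in the reduction are themselves stationary, which holds because the dynamics commutes with translations, exactly as density is treated in \clem{densities}.
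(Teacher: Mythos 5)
Your first two steps are sound and in fact coincide with the opening move of the paper's proof: applying stationarity to $g(\eta)=(1-\eta(0))(1-\eta(1))$ (the paper uses the event $\{\eta(0{:}1)=11\}$, which yields the identical identity) gives $0=\mu(Lg)=-p\,\mu(1100)-(1-p)\,\mu(0011)$, hence $\mu(1100)=0$ for $p>0$; the paper handles $p=0$ by the reflection symmetry $p\to1-p$ rather than by your parenthetical. The genuine gap is your third step: the claim that, under the Harris construction, interleaved $11$ and $00$ defects become adjacent within finite time with positive probability. You acknowledge this yourself (``the main obstacle''), and it is not a routine verification: the defects are not independent random walks (their jump rates depend on the local environment, they interact, and the intervening background need not stay alternating), and nothing in your outline bounds below the probability of a collision by a fixed time in infinite volume. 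The finite-ring argument of \cite{Oliveira} that you invoke as a fallback does not transfer, since the lemma is precisely a statement about $X=\{0,1\}^\bbz$. As written, the proposal is a correct strategy reduced to an unproved dynamical lemma, not a proof.

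The paper closes exactly this gap with a static device worth internalizing, since it converts the collision intuition into a one-line stationarity identity. Between a $11$ and the nearest $00$ to its right the configuration must alternate, so the pattern is $11(01)^j00$; let $2k$ be the minimal such separation occurring with positive $\mu$-probability (well defined once both defect types coexist at positive density, which your reduction supplies, and $k\ge1$ by the computation above). Now apply $\mu(L\,\ind_A)=0$ to the cylinder event $A=\{\eta(2{:}2k+3)=11(01)^{k-1}00\}$: every loss term, and every gain term except the rightward jump out of $11(01)^k00$, involves a pattern $11(01)^j00$ with $j<k$ and so has $\mu$-measure zero by minimality, leaving $\frac{d}{dt}\mu_t(A)\big|_{t=0}=p\,\mu\bigl(\eta(0{:}2k+3)=11(01)^k00\bigr)>0$, contradicting stationarity---no pathwise control of defect motion is ever needed. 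A secondary error: your assertion that a spatially ergodic component of a TIS measure ``inherits stationarity'' because the dynamics commutes with translations is false in general; covariance only makes $S(t)$ permute the ergodic components (consider a simultaneous global spin flip at Poisson times, which commutes with translations and leaves the symmetric mixture of the two constant configurations stationary while swapping its two ergodic components), and in \clem{densities} stationarity of the conditional measures comes from conservation of density, not mere covariance. Fortunately you never need it: the ergodic decomposition into TI (not necessarily stationary) components already yields positive density and interleaving of both defect types on the coexistence event, and the contradiction can be run against $\mu$ itself.
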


\begin{proof} The argument we give requires that $p$ be strictly
positive; by the symmetry of the model under simultaneous spatial
reflection and the replacement $p\to1-p$, we may assume that this
condition holds.  Suppose that $\mu(F\cup G)<1$.  If $\eta\notin F\cup G$
then $\eta$ contains two adjacent zeros and two adjacent ones; let $2k$
be the minimum number of sites by which a double zero follows a double
one---that is, for which the string $11(01)^k00$ occurs in a
configuration---with nonzero probability.  We first note that $k\ge1$
almost surely. For from \eqref{rates}, the  translation invariance of
$\mu$, and $\mu=\mu_t$ for all $t$,
 \begin{align}\nonumber
\frac{d}{dt}\mu\bigl(\eta(0{:}1)=11\bigr)
  &=-p\,\mu\bigl(\eta(0{:}2)=110\bigr)-(1-p)\,\mu\bigl(\eta(-1{:}1)=011\bigr)\\
  &\hskip30pt +p\,\mu\bigl(\eta(-2{:}1)=1101\bigr)
     +(1-p)\,\mu\bigl(\eta(0{:}3)=1011\bigr)\nonumber\\
  &=-p\,\mu\bigl(\eta(0{:}3)=1100\bigr)
      -(1-p)\,\mu\bigl(\eta(-2{:}1)=0011\bigr).\nonumber
 \end{align}
  This quantity must vanish, since $\mu$ is stationary, and since $p$ is
nonzero the probability of $1100$ occurring is zero.

 Now by the choice of $k$, $\mu\bigl(\eta(0{:}2k+3)=11(01)^k00\bigr)>0$.
Then a simple calculation as above, using repeatedly the fact that for
any $i$ and for $j<k$, $\mu\bigl(\eta(i{:}i+2j+3)=11(01)^j00\bigr)=0$,
shows that
 \be
 \frac{d}{dt}\mu\bigl(\eta(2{:}2k+3)=11(01)^{(k-1)}00\bigr)
   = p\,\mu\bigl(\eta(0{:}2k+3)=11(01)^k00\bigr)>0,
 \ee
  contradicting stationarity.\end{proof}

\begin{remark}\label{simple_configs}Let $\eta^*\in X$ be the period-two
configuration defined by $\eta^*(i)=i\bmod2$.  $\eta^*$ and its translate
$\tau\eta^*$ consist of alternating 1's and 0's, and the measure
$\mu^*=(\delta_{\eta^*}+\delta_{\tau\eta^*})/2$ is a TIS measure with
density $\rho=1/2$.  Note that $\mu^*(F)=\mu^*(G)=1$. \end{remark}

\begin{theorem}\label{contsum} Let $\mu$ be a TIS measure on $X$ with
density $\rho$.  Then:
 \par\smallskip\noindent
 (a) If $\rho<1/2$ then  $\mu(F)=1$, i.e.,  $\mu$ is supported on $F$.
 \par \smallskip\noindent
 (b) If $\rho=1/2$ then $\mu=\mu^*$ (see \crem{simple_configs}).
 \par\smallskip\noindent
 (c) If $\rho>1/2$ then $\mu(G)=1$, i.e.,  $\mu$ is supported on $G$.
\end{theorem}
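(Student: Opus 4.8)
The plan is to deduce the whole theorem from \clem{exclusive}, which already gives $\mu(F\cup G)=1$, together with two elementary density bounds. If $\eta\in F$ and $r(\eta)$ exists, then $r(\eta)\le1/2$: since no two $1$'s are adjacent, any window of $2N+1$ sites contains at most $N+1$ occupied sites, so the averages in \eqref{density} are bounded by $(N+1)/(2N+1)\to1/2$. Symmetrically (exchanging the roles of $0$ and $1$, equivalently of $F$ and $G$) every $\eta\in G$ with a density satisfies $r(\eta)\ge1/2$. Because $\mu$ has density $\rho$ we have $r(\eta)=\rho$ $\mu$-a.s., so $r$ exists off a $\mu$-null set and these bounds apply there.

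First I would dispose of (a) and (c) by comparing densities. For (a), with $\rho<1/2$: the set $G\cap\{\eta\mid r(\eta)=\rho\}$ is empty, since points of $G$ have density at least $1/2>\rho$; hence $\mu(G)=0$, and as $\mu(F\cup G)=1$ this gives $\mu(F)=1$. Case (c) is identical after interchanging $F$ and $G$: for $\rho>1/2$ every frozen configuration has density at most $1/2<\rho$, so $\mu(F)=0$ and therefore $\mu(G)=1$.

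The substantive case is (b), $\rho=1/2$. Here I would first invoke \clem{densities}: since $r\equiv1/2$ is already a.s.\ constant, the only remaining splitting is over $F$, so $\mu=\mu(F)\,\mu_1+\mu(X\setminus F)\,\mu_2$, where $\mu_1$ and $\mu_2$ are the normalized restrictions of $\mu$ to $F$ and to $X\setminus F$, each a TIS measure of density $1/2$; moreover $\mu(F\cup G)=1$ forces $\mu_2(G)=1$. It then suffices to show $\mu_1=\mu_2=\mu^*$. For $\mu_1$, supported on $F$ so that $\mu_1(\eta(0)=\eta(1)=1)=0$, I would use the two-site marginals: density $1/2$ gives $\mu_1(\eta(0)=1)=\mu_1(\eta(1)=1)=1/2$, and expanding each over the value at the neighboring site yields $\mu_1(\eta(0)=1,\eta(1)=0)=\mu_1(\eta(0)=0,\eta(1)=1)=1/2$; since the four pair-probabilities sum to $1$, this forces $\mu_1(\eta(0)=\eta(1)=0)=0$ as well. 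Thus under $\mu_1$ consecutive sites a.s.\ differ, so $\mu_1$ is supported on the two alternating configurations $\{\eta^*,\tau\eta^*\}$, and translation invariance (which swaps these two points) forces the equal-weight mixture $\mu^*$ of \crem{simple_configs}. The identical computation with $0$ and $1$ exchanged gives $\mu_2=\mu^*$, whence $\mu=\mu^*$.

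The only genuinely model-specific input is \clem{exclusive}; once it is granted, the argument is purely measure-theoretic and combinatorial, and I expect the main (and quite mild) obstacle to be the bookkeeping in (b) — verifying that the restrictions produced by \clem{densities} are indeed translation-invariant stationary measures of density $1/2$, and that the two-site counting closes up to pin $\mu$ down exactly to $\mu^*$ rather than merely to the support $\{\eta^*,\tau\eta^*\}$.
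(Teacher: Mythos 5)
Your proposal is correct, and it rests on the same foundation as the paper's proof: everything is deduced from \clem{exclusive} plus density counting, and your treatment of (a) and (c) is essentially identical to the paper's (the paper phrases it as ``a configuration of density $\rho<1/2$ in $F\cup G$ must contain a positive density of double zeros, hence lies in $F\setminus G$,'' which is your window-counting bound in contrapositive form). Where you genuinely diverge is in (b): the paper argues configuration-wise on $\supp\mu$, observing that a density-$1/2$ configuration in $F\cup G$ has zero density of double ones and double zeros and hence (by translation invariance) $\mu$-a.s.\ has none at all; you instead decompose $\mu=\mu(F)\,\mu_1+\mu(X\setminus F)\,\mu_2$ via \clem{densities} and pin each piece down by two-site marginal arithmetic, using $\mu_i(\eta(0)=1)=1/2$ together with $\mu_1(11)=0$ (resp.\ $\mu_2(00)=0$) to force the complementary pattern probability to vanish. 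Your route makes fully explicit a counting step the paper leaves implicit (that a density-$1/2$ configuration in $F$ has vanishing density of double zeros requires a small gap-counting argument), at the cost of invoking the decomposition; note, however, that the ``bookkeeping'' you flag as the main obstacle is lighter than you fear, since your marginal computation never uses stationarity of $\mu_1,\mu_2$ at all---only their translation invariance, which is immediate because $F$ is a translation-invariant set---and the step from $r=1/2$ a.s.\ to $\mu_i(\eta(0)=1)=1/2$ is the dominated-convergence fact the paper records just after \eqref{density}. The final equal-weights step (translation swaps $\eta^*$ and $\tau\eta^*$, forcing $\mu_i=\mu^*$) is stated only implicitly in the paper, so spelling it out as you do is a small improvement in completeness.
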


\begin{proof} We know that $\mu$ is supported on $F\cup G$.  Suppose that
$\eta\in\supp \mu$; then we may assume that $\eta\in F\cup G$ and
$r(\eta)=\rho$.  If $\rho=r(\eta)<1/2$ then, by \eqref{density}, $\eta$
must contain a positive density of double zeros and so lie in
$F\setminus G$, verifying (a); similarly, if $\rho>1/2$ then
$\eta\in G\setminus F$, verifying (c).  If $\rho=1/2$ then \eqref{density} with
$\eta\in F\cup G$ implies that $\eta$ does not have a positive density of
either double ones or double zeros, and hence almost surely has no double
ones or double zeros at all, verifying (b).  \end{proof}

\section{The low density region\label{lowrho}}

In this section we consider TIS states on $X_\rho$ with $0<\rho<1/2$; by
\cthm{contsum} these are necessarily supported on $F$.  In fact, any TI
measure supported on $F$ is clearly a TIS state; in \crem{FTI} we obtain
a prescription for obtaining all such states from a construction
introduced there for the study of the high density region.  Here we
address the following question:

\begin{question}\label{keyq}If the system is given an initial measure
$\murho$, the Bernoulli measure with density $0<\rho<1/2$, what is the
final measure?  \end{question}
 
\subsection{The totally asymmetric model\label{tasep}}

In this section we address \cqes{keyq} for the totally asymmetric model
(F-TASEP); we take $p=1$ in \eqref{rates} but the discussion for $p=0$
would be similar.  The answer is given in \cite{GLS1,GLS2} for the
discrete-time F-TASEP, and the analysis there applies almost unchanged in
the continuous-time case, so we content ourselves with a brief summary.

First, it is convenient to enlarge the state space of the process from
$X$ to $\Xb:=X\times\bbz$, writing the state of the system at time $t$ as
$(\eta_t,J_t)$.  In this new version of the model, $J_t$ is the signed
count of the number of particles passing between sites 0 and 1 up to time
$t$.  The new version is defined on the same probability space
$(\Omega,\Pmu)$ as the original one (see \eqref{OmegaP}, \eqref{SAPP}),
with $J_t$ incremented or decremented by 1  at, and only at, those times
$t^{(0,r)}_j$ or $t^{(1,l)}_j$, respectively, at which jumps actually
occur; it is straightforward to verify, as in \cite{Swart}, that this
leads to a well-defined process.  We always assume that $J_0=0$.

The variable $J_t$ allows us to introduce the {\it height profile}
$h_t:\bbz\to\bbz$ associated with the pair $(\eta_t,J_t)\in\Xb$ (see,
e.g., \cite{ISS}), defined by the requirements that
$h_t(i)-h_t(i-1)=1-2\eta_t(i)=(-1)^{\eta_t(i)}$ for all $i\in\bbz$ and
$h_t(0)=2J_t$, or more explicitly by
 \be\label{profdef} h_t(i)=\begin{cases}
  2J_t+\ds\sum_{j=1}^i(-1)^{\eta_t(j)},& \hbox{if $i\ge0$,}\\ 
  \noalign{\vskip3pt}
   2J_t-\ds\sum_{j=i+1}^{0}(-1)^{\eta_t(j)},& \hbox{if $i<0$.}\end{cases}
 \ee
  Then, since $0<\rho<1/2$, $\lim_{i\to\pm\infty}h_t(i)=\pm\infty$.
Moreover, as a function of $t$, $h_t$ is monotonically increasing; in
particular, $h_t(i)$ increases by 2 when a particle jumps from site $i$
to site $i+1$, and such an increase can occur only if $h_t(i-1)>h_t(i)$.
See the inset in \cfig{fig:profiles}.

We can now sketch the determination of the fate of an arbitrary initial
configuration $\eta_0\in X_\rho$; full details are given in \cite{GLS2}.
Define $Q=Q(\eta_t)\subset\bbz$ by
$Q:=\{q\in\bbz\mid h_t(q)> \sup_{i<q}h_t(i)\}$.  From the observation
above on how $h_t$ can increase it follows that $Q(\eta_t)$ is in fact
independent of $t$ and that for $q\in Q$, $h_t(q)$ is constant.  If $q$
and $q'$ are consecutive elements of $Q$ and $i\in\bbz$ satisfies
$q\le i<q'$, then $h_t(i)<h_t(q')=h_0(q')$, so that
$\lim_{t\to\infty}h_t(i)$, and hence also
$\eta_\infty(i)=\lim_{t\to\infty}\eta_t(i)$, exist.  Further, since
$i+h_0(i)$ is even for all $i$ and $h_0(q')-h_0(q)=1$, $q'-q$ is odd,
and so necessarily
 \be\label{lowetainf}
\eta_\infty\rng{q+1}{q'}=1\,0\,1\,0\,\cdots\,1\,0\,0=(1\,0)^{(q'-q-1)/2}0. 
 \ee
 See \cfig{fig:profiles}.  This completes the determination of
$\eta_\infty$.

\begin{figure}[ht!]  \begin{center}
\includegraphics[width=5truein,height=2.5truein]{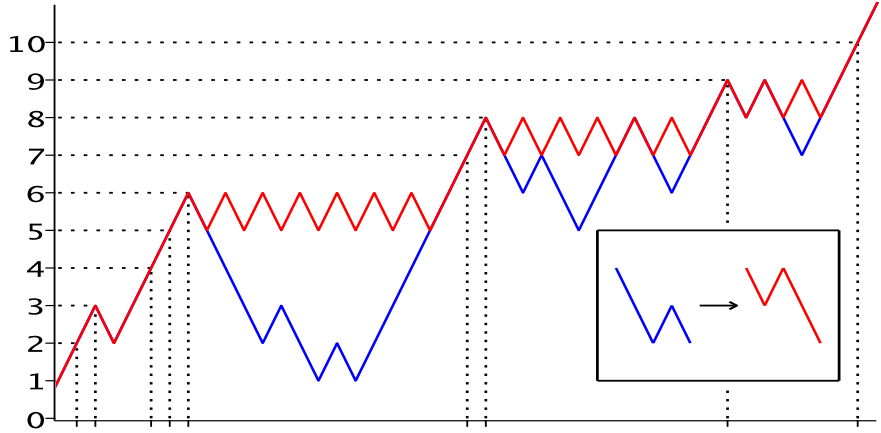}
\caption{Portion of typical initial (blue, lower) and final (red, upper)
height profiles in the F-TASEP; the profile increases monotonically in
time from the former to the latter.  The vertical dotted lines are at
sites in $Q$.  The inset shows the local change in the profile associated
with the jump of a single particle, carrying local configuration
$1\,1\,0\,1$ to $1\,0\,1\,1$.}\label{fig:profiles} \end{center}
\end{figure}

 We now suppose that $\eta_0$ is distributed according to the Bernoulli
distribution $\murho$, and write $\Prho$ rather than $\Pmurho$.  We ask
for the distribution $\murho_\infty=\eta_{\infty*}\Prho$ of
$\eta_\infty$, where here we think of $\eta_\infty$ as a map
$\eta_\infty:\Omega\to X_\rho$.  Let $V=\{\eta\in X\mid 0\in Q(\eta)\}$;
note that \eqref{lowetainf} implies that $V$ coincides, up to a set of
$\murho_\infty$-measure zero, with $\{\eta\mid\eta\rng{-1}{0}=0\,0\}$, and
with $\murho_\infty\bigl(\{\eta\mid\eta\rng{-1}{0}=1\,1\}\bigr)=0$ this
implies that $\murho_\infty(V)=1-2\rho$.  To obtain $\murho_\infty$ we
will first describe the conditional measure $\murho_\infty(\cdot\mid V)$,
then obtain $\murho_\infty$ as the (unique) TI measure with this
conditional measure.  For $\eta_0\in V$ we may index $Q=Q(\eta_0)$ as
$Q=\{q_k\}_{k\in\bbz}$, taking $q_0=0$ and requiring that the $q_k$ be
increasing in $k$; then we may specify $\murho_\infty(\cdot\mid V)$ by
giving the joint distribution of the variables $n_k$ defined by
$q_{k+1}-q_k=2n_k+1$.

It is easy to see that the $n_k$ are i.i.d.  To describe the distribution
of a single $n_k$ we recall the {\it Catalan numbers} \cite{Stanley}
 \be\label{catalan}
   c_n=\frac1{n+1}\binom{2n}n,\quad n=0,1,2,\ldots\,;
 \ee
  $c_n$ counts the number of strings of $n$ 0's and $n$ 1's in which the
number of 0's in any initial segment does not exceed the number of 1's.
If $q=q_k\in Q$ and $l=q+2n+1$ then $q_{k+1}=l$ if and only if
$h_0(l)=h_0(q)+1$ and $h_0(i)\le h_0(q)$ for $q<i<l$, and there are $c_n$
strings $\eta\rng{q+1}{l-1}$ satisfying this condition and hence yielding
$q_{k+1}=l$.  Since each such string has probability $\rho^n(1-\rho)^{n+1}$ we
have sketched a proof of the next theorem, which is taken from
\cite{GLS2}.  Recall that $\tau$ denotes translation and $\ind_C$ the
indicator function of $C$.

\begin{theorem}\label{ftmeas}(a) \label{ldrenewal}The random variables
$n_k$ of the F-TASEP, defined on $V$ as above, are i.i.d.~under
$\murho_\infty(\cdot\mid V)$, with distribution
 \be\label{catdist} \murho_\infty(\{n_k=n\}\mid V)
    =c_n\rho^n(1-\rho)^{n+1}, \quad n=0,1,2,\ldots.
 \ee

 \smallskip\noindent
 (b) The measure $\murho_\infty$ is given by 
 \be
   \murho_\infty=\frac1Z\sum_{m\ge0}\sum_{i=0}^{2m}\tau_*^{-i}
 \bigl(\ind_{V_m}\murho_\infty(\cdot\mid V)\bigr)
=\sum_{m\ge0}\sum_{i=0}^{2m}\tau_*^{-i}
 \bigl(\ind_{V_m}\murho_\infty\bigr),
 \ee
 where $V_m=\{\eta_0\in V\mid q_1-q_0=2m+1\}$ and
 $Z=\murho_\infty(V)^{-1}$ is a normalizing constant.
\end{theorem}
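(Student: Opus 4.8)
The plan is to bypass the dynamics entirely. The analysis preceding the statement shows that the ladder set $Q$ and the limit $\eta_\infty$ are deterministic functions of $\eta_0$: with $J_0=0$ the profile $h_0$ is fixed by $\eta_0$, the set $Q$ does not change in time, and on each block between consecutive ladder points $\eta_\infty$ is the fixed string of \eqref{lowetainf}. Hence $\murho_\infty=(\eta_\infty)_*\murho$, and since $Q(\eta_\infty)=Q(\eta_0)$ and the $n_k$ are read off from $Q$, every statement about $(n_k)$ and $V$ under $\murho_\infty(\cdot\mid V)$ may be computed under the Bernoulli measure $\murho$ itself. Writing $\xi_j=1-2\eta_0(j)=\pm1$, the increments of $h_0$ are i.i.d.\ with positive mean $1-2\rho$, and $Q$ is exactly the set of strict ascending ladder indices of this walk; because each step changes the height by one, consecutive ladder heights differ by exactly $1$, which forces the gaps to be odd, $q_{k+1}-q_k=2n_k+1$.

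For part (a) I would obtain the marginal law from the Catalan count already sketched: conditioning $q_k=q$ and $q_{k+1}=q+2n+1=l$ is the event that the walk returns from $0$ to $0$ in $2n$ steps while staying $\le0$ and then takes a final up-step into $l$; the admissible sign patterns are counted by $c_n$, each block carries $n$ ones and $n+1$ zeros, so each has $\murho$-probability $\rho^n(1-\rho)^{n+1}$, giving \eqref{catdist}. The identity $\sum_n c_n\rho^n(1-\rho)^{n+1}=(1-\rho)\,\frac{1-\sqrt{1-4\rho(1-\rho)}}{2\rho(1-\rho)}=1$, using $\sqrt{1-4\rho(1-\rho)}=1-2\rho$, confirms that the law is proper and the gaps a.s.\ finite. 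For independence I would first note the coordinate split: $V=\{0\in Q\}$ is measurable with respect to $(\eta_0(j))_{j\le0}$, while every right gap $n_k$, $k\ge0$, is measurable with respect to $(\eta_0(j))_{j\ge1}$; under $\murho$ these $\sigma$-algebras are independent, so conditioning on $V$ leaves the joint law of the right gaps unchanged. That the right gaps are themselves i.i.d.\ is the renewal property: restarting the i.i.d.\ walk at a ladder index (strong Markov property) makes the next gap independent of the past and identically distributed. The left gaps I would handle by the same argument applied to the reversed walk $\tilde h_k=-h_0(-k)$, whose increments are again i.i.d.\ with mean $1-2\rho$ and under which $V$ becomes the event that $\tilde h$ stays positive, so the left ladder gaps form an independent copy of the same renewal sequence.

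For part (b) I would first reconcile the two displayed expressions: since $V_m\subset V$ and $\murho_\infty(\cdot\mid V)=Z\,\ind_V\murho_\infty$ with $Z=\murho_\infty(V)^{-1}=(1-2\rho)^{-1}$, one has $\ind_{V_m}\murho_\infty(\cdot\mid V)=Z\,\ind_{V_m}\murho_\infty$, so the prefactor $1/Z$ converts the first sum into the second. The substance is the decomposition. The map $\eta_\infty$ commutes with $\tau$ and $\murho$ is translation invariant, so $\murho_\infty$ is translation invariant. Because the walk has positive drift, $Q$ is a.s.\ bi-infinite with finite gaps, so for a.e.\ $\eta$ the site $0$ lies in a unique block $[q_k,q_{k+1})$, of some size $2m+1$, at a unique offset $i=-q_k\in\{0,\dots,2m\}$, and this $\eta$ equals $\tau^{-i}\eta'$ for a unique $\eta'\in V_m$; hence $\{\tau^{-i}V_m: m\ge0,\ 0\le i\le2m\}$ partitions $X$ up to a null set. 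Translation invariance then gives, for measurable $A$, $\murho_\infty(A\cap\tau^{-i}V_m)=\murho_\infty(\tau^iA\cap V_m)=\tau_*^{-i}\bigl(\ind_{V_m}\murho_\infty\bigr)(A)$, and summing over the partition yields the asserted formula.

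\textbf{Main obstacle.} The one genuinely delicate point is the \emph{two-sided} independence in (a). The right gaps are immediate from the coordinate split plus the strong Markov property, but the left gaps live on the same coordinates that carry the conditioning event $V$, so the conditioning cannot simply be dropped. The clean resolution is to recognize $\murho_\infty(\cdot\mid V)$ as the Palm measure of the stationary renewal process of ladder points --- for which two-sided i.i.d.\ gaps and a renewal at the origin are standard --- or, equivalently, to carry out the reversal argument above with care; everything else (the Catalan count, the normalization, and the partition and finiteness facts in (b)) is routine once positive drift is invoked.
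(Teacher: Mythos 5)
Your proposal is correct and follows essentially the same route as the paper, whose own ``proof'' is exactly this sketch---the limit $\eta_\infty$ read off deterministically from the ladder set $Q$, and Catalan counting of blocks under the Bernoulli measure $\murho$---with the i.i.d.\ claim dismissed as ``easy to see'' and full details deferred to \cite{GLS2}. If anything you are more careful than the paper at the two real pressure points: the two-sided independence (your ``main obstacle'' is genuine, and either of your proposed resolutions works, provided you note that the left points of $Q$ correspond to \emph{future-minimum} points of the reversed walk $\tilde h$, i.e.\ points $k$ with $\tilde h_m>\tilde h_k$ for all $m>k$, rather than to its ascending ladder points, so the regeneration argument there is not literally ``the same argument'' as on the right), and the partition/inversion argument for part (b), which the paper states without proof.
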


\subsection{The  model in finite volume\label{fvasep}}

In this section we address \cqes{keyq}, or rather an appropriately
modified version of it, for the F-ASEP on a periodic ring of $L$ sites.
The system size $L$ will be constant during our analysis and we typically
suppress $L$-dependence. We first discuss the totally asymmetric model
and describe the result corresponding to \cthm{ftmeas}, then show that
the limiting measure is in fact independent of the asymmetry parameter
$p$.  The ring is denoted $\bbz_L:=\{0,1,\ldots,L-1\}$; we consider a
system of $N$ particles on these sites, governed by the obvious
modification of the F-ASEP dynamics defined in \csect{models}.  For a
configuration $\eta\in \XL$ we let $|\eta|:=\sum_{i=1}^L\eta(i)$ denote
the number of particles in $\eta$; the configuration space of our model
is then $\XN:=\{\eta\in\XL\mid|\eta|=N\}$.  We will be interested in the
fate of an initial measure $\muN$ which is uniform on $\XN$; the
probability space is then $(\OmegaN,\PN)$:
 \be\begin{aligned}\label{OmegaL}
\OmegaN&=\XN\times\prod_{i\in\bbz_L}\bigl(\T^{(i,r)}\times\T^{(i,l)}\bigr),\\
  \PN&=\muN\times\prod_{i\in\bbz_L}
   \bigl(\lambda_p^{(i,r)}\times\lambda_p^{(i,l)}\bigr).
 \end{aligned}\ee
  Here $\T$ and $\lambda$ are as in \eqref{OmegaP}.  (In view of our
earlier use of $\murho$ and $\Prho$, writing $\muN$ and $\PN$ is
admittedly an abuse of notation, but we believe that this will not give
rise to confusion.)   The construction of
the dynamics is parallel to the construction in infinite volume, but is
technically simpler because the considerations of \crem{construction} do
not apply; we omit details.  The auxiliary variable $J_t$ is not needed
here.

Now consider the F-TASEP, taking $p=1$ above.  Given an initial
configuration $\eta_0\in \XL$, with $|\eta_0|<L/2$, we extend $\eta_0$ to
an $L$-periodic configuration $\eta_0^*$ on $\bbz$, apply the
construction of \csect{tasep} to obtain $Q(\eta_0^*)$, and let
$Q(\eta_0):=Q(\eta^*_0)\cap\{0,1,\ldots,L-1\}$; $Q(\eta_0)$ will contain
$L-2|\eta_0|$ sites.  An argument as in infinite volume shows that the
limiting configuration $\eta_\infty$ exists and satisfies
\eqref{lowetainf} for $q,q'$ consecutive (in cyclic order) elements of
$Q(\eta_0)$ (with the expression $q'-q-1$ in the exponent of
\eqref{lowetainf} interpreted mod $L$).

Now fix $N<L/2$; we will determine the distribution
$\muN_\infty=\eta_{\infty*}\PN$ of $\eta_\infty$ when $\eta_0$ is
distributed according to $\muN$ (this is the modified version of
\cqes{keyq} referred to above).  Let
$\VN:=\{\eta\in\XN\mid 0\in Q(\eta)\}$ and note that
$|\VN|=(L-2N)\binom LN/L$, since if one partitions $\XN$ into equivalence
classes under translation then each class contains a fraction $(L-2N)/L$
of elements belonging to $\VN$.  We can determine the conditional measure
$\muN_\infty(\cdot\mid\VN)$ by simple counting: given
$0=q_0<q_1<\ldots<q_{L-2N-1}\le L-1$, with
$q_{i+1}-q_i\equiv2n_i+1\pmod{L}$ for $0\le n_i<L/2$, there are
$\prod_{i=0}^{L-2N-1}c_{n_i}$ initial configurations $\eta_0\in\XN$ with
$Q(\eta_0)=\{q_0,\ldots,q_{L-2N-1}\}$, all leading to
$\eta_\infty=\eta^{(q_0,\ldots,q_{L-2N-1})}$, where
 \be
\eta^{(q_0,\ldots,q_{L-2N-1})}
   := (10)^{q_1-q_0}0(10)^{q_2-q_1}0\cdots0(10)^{q_0-q_{L-2N-1}+L}0.
 \ee
  Thus we have

\begin{theorem}\label{finmeas}
(a) The possible limiting configurations of the F-TASEP model on $\VN$
  are the $\eta^{(q_0,\ldots,q_{L-2N-1})}$, and
 \be
 \muN_\infty\bigl(\{\eta^{(q_0,\ldots,q_{L-2N-1})}\}\mid\VN)
  = \frac{\displaystyle{L\prod\nolimits_{i=0}^{L-2N-1}c_{n_i}}}
         {(L-2N)\binom LN}.
 \ee

 \par\smallskip\noindent
(b) $\muN_\infty=\displaystyle{
   \frac1L\sum_{i=0}^{L-1}\tau^i_*\muN_\infty(\cdot\mid\VN)}$.
\end{theorem}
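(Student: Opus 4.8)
The plan is to exploit the fact that, for $p=1$, the limiting configuration is a \emph{deterministic} function of the initial one. By the analysis of \csect{tasep} (carried over to the ring), $\eta_\infty$ exists and is completely determined by $\eta_0$: between consecutive elements of $Q(\eta_0)$ it has the structure \eqref{lowetainf}. Write $\eta_\infty=\Phi(\eta_0)$ for this deterministic map $\Phi:\XN\to\XN$. Since the initial measure $\muN$ is uniform, $\muN_\infty=\Phi_*\muN$ is just a normalized counting measure, $\muN_\infty(\{\eta\})=\#\Phi^{-1}(\{\eta\})\big/\binom LN$, so the proof reduces to assembling the preimage count (essentially done just before the statement) together with two bookkeeping facts about $Q$.

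For part (a), I would first record that $Q$ is invariant under the dynamics, so $Q(\eta_\infty)=Q(\eta_0)$; consequently the output event $\{\eta_\infty\in\VN\}$ coincides with the input event $\{\eta_0\in\VN\}$, and any $\eta_0$ with $\Phi(\eta_0)=\eta^{(q_0,\ldots,q_{L-2N-1})}$ necessarily has $Q(\eta_0)=\{q_0,\ldots,q_{L-2N-1}\}$. Hence the full preimage of $\eta^{(q_0,\ldots,q_{L-2N-1})}$ is exactly the set of the $\prod_i c_{n_i}$ configurations with that $Q$-set, and (since the gap structure is forced by \eqref{lowetainf} with $q_0=0$) every limiting configuration in $\VN$ is of this form. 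Because $\eta^{(q_0,\ldots,q_{L-2N-1})}$ itself lies in $\VN$, conditioning gives $\muN_\infty(\{\eta^{(q_0,\ldots,q_{L-2N-1})}\}\mid\VN)=\muN_\infty(\{\eta^{(q_0,\ldots,q_{L-2N-1})}\})\big/\muN_\infty(\VN)$; the numerator is $\prod_i c_{n_i}\big/\binom LN$, and the denominator equals $\muN(\VN)=|\VN|\big/\binom LN=(L-2N)/L$ by the stated count $|\VN|=(L-2N)\binom LN/L$. Dividing yields the claimed formula.

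For part (b), the key preliminary is that $\muN_\infty$ is translation invariant: $\muN$ is TI, the rates \eqref{rates} are translation-covariant, and $Q$ is equivariant ($Q(\tau\eta)=\tau Q(\eta)$), so $\Phi(\tau\eta)=\tau\Phi(\eta)$ and thus $\tau_*\muN_\infty=\muN_\infty$. The asserted identity is then a Palm-type decomposition that I would verify pointwise. Fix a configuration $\eta$ and evaluate the right-hand side at $\{\eta\}$: the $i$-th term is $\muN_\infty(\{\tau^{-i}\eta\}\mid\VN)$, which is nonzero precisely when $\tau^{-i}\eta\in\VN$, i.e.\ when $i\in Q(\eta)$, by equivariance of $Q$. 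On the support there are exactly $|Q(\eta)|=L-2N$ such indices, and for each, translation invariance gives $\muN_\infty(\{\tau^{-i}\eta\})=\muN_\infty(\{\eta\})$; moreover $\muN_\infty(\VN)=(L-2N)/L$, since by TI $\sum_{i}\muN_\infty(\{i\in Q\})=L\,\muN_\infty(\VN)$ while this sum also equals $\muN_\infty(|Q|)=L-2N$. Summing the $L-2N$ equal contributions and multiplying by $1/L$ collapses to $\muN_\infty(\{\eta\})$ (both sides vanish when $\eta$ is not a limiting configuration), proving (b).

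The only genuinely delicate point is the passage between conditioning on the output and on the input in part (a); this rests entirely on the $t$-independence of $Q$ established in \csect{tasep}, together with the count $|\VN|=(L-2N)\binom LN/L$, which itself follows from the double count $\sum_{\eta\in\XN}|Q(\eta)|=L\,|\VN|$. Once these are in hand both parts are pure bookkeeping, since the substantive work --- existence and determinism of $\eta_\infty$ and the Catalan preimage count --- was carried out before the statement.
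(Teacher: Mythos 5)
Your proposal is correct and takes essentially the same route as the paper, which presents the theorem as an immediate consequence of the discussion preceding it: the deterministic limiting map determined by the $t$-independent set $Q$, the Catalan preimage count $\prod_i c_{n_i}$, and the count $|\VN|=(L-2N)\binom LN/L$. Your only departures are cosmetic refinements of that bookkeeping --- deriving $|\VN|$ by the double count $\sum_{\eta\in\XN}|Q(\eta)|=L\,|\VN|$ instead of the paper's translation-class fraction argument, and verifying (b) pointwise (which correctly handles periodic configurations, where distinct $i$ give coinciding translates) where the paper simply asserts it.
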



We now consider the general F-ASEP model on $\bbz_L$, with partially
asymmetric dynamics governed by the asymmetry parameter $p$.  Since from
any initial configuration $\eta_0$ there is a sequence of possible
transitions leading to a frozen configuration,
$\eta_\infty=\lim_{t\to\infty}\eta_t$ exists almost surely, for any
$\eta_0$, and is frozen.  The distribution
$\muN_\infty=\eta_{\infty*}\PN$ of the limiting configurations is then
well defined; our goal is to show that this distribution is independent
of $p$ (as our notation indicates).  The next lemma is simple---it
follows immediately from elementary consideration of the dynamics---but
will be useful in several places.

\begin{lemma}\label{doublez}Suppose that $\eta_t$ is the state at time
$t$ of a process evolving via the F-ASEP dynamics, for a system either
of $N$ particles on $L$ sites or in infinite volume.  If for some site
$i$ and time $t$, $\eta_t(i)=\eta_t(i+1)=0$, then also
$\eta_s(i)=\eta_s(i+1)=0$ for all $s<t$, respectively $\PN$ or $\Prho$
almost surely.\end{lemma}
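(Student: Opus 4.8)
The plan is to show that no single jump of the F-ASEP dynamics can create a pair of adjacent empty sites at a fixed bond. Within the graphical construction (see \crem{construction}) the configuration changes only at the Poisson jump times, at each of which a single nearest-neighbor swap is performed, so it suffices to establish the following per-transition statement: if one such transition carries a configuration $\eta$ to $\eta'$ and $\eta'(i)=\eta'(i+1)=0$, then already $\eta(i)=\eta(i+1)=0$. Granting this, the lemma follows by propagating double-emptiness backward across the (locally finitely many) transition times in $[s,t]$.

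For the per-transition statement I would argue by cases on the bond $(j,j+1)$ at which the swap occurs. If $\{j,j+1\}\cap\{i,i+1\}=\emptyset$ the swap does not touch sites $i$ or $i+1$ and there is nothing to prove, so only $j\in\{i-1,i,i+1\}$ is relevant, and for each of these three bonds I simply read off from \eqref{rates} the configuration that enables the swap and the configuration it produces. A swap on $(i,i+1)$ only exchanges $(1,0)$ with $(0,1)$ there, so the pair $(\eta'(i),\eta'(i+1))$ is never $(0,0)$; a swap on $(i-1,i)$ either sets $\eta'(i)=1$ (rightward jump) or leaves the already-present $\eta(i+1)=1$ untouched (leftward jump); and a swap on $(i+1,i+2)$ either leaves the already-present $\eta(i)=1$ untouched (rightward jump) or sets $\eta'(i+1)=1$ (leftward jump). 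In every case the pair $(\eta'(i),\eta'(i+1))$ is not $(0,0)$, so a transition ending in $\eta'(i)=\eta'(i+1)=0$ must act on a bond disjoint from $\{i,i+1\}$; then $\eta$ and $\eta'$ agree at sites $i$ and $i+1$ and the per-transition statement holds.

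To assemble the two pieces I use that the values $\eta_s(i)$ and $\eta_s(i+1)$ can change only at the Poisson times governing swaps on the bonds $(i-1,i)$, $(i,i+1)$, and $(i+1,i+2)$, of which there are almost surely only finitely many in any interval $[s,t]$. Between consecutive such times these values are constant, and right-continuity of the paths together with the per-transition statement shows that whenever the pair is doubly empty just after one of these times it was doubly empty just before; a backward induction from $t$ to $s$ then yields $\eta_s(i)=\eta_s(i+1)=0$. In finite volume this is immediate under $\PN$, while in infinite volume it holds $\Prho$-a.s. once one invokes, from the graphical construction of \crem{construction} and \cite{Swart}, that $\eta_s$ is well-defined with right-continuous paths. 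The only genuine---and entirely standard---technical point is this infinite-volume well-definedness; the combinatorial heart of the lemma is the short case check above.
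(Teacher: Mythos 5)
Your proof is correct, and it is exactly the argument the paper has in mind: the paper states the lemma without a written proof, remarking only that it ``follows immediately from elementary consideration of the dynamics,'' and your case check on the bonds $(i-1,i)$, $(i,i+1)$, $(i+1,i+2)$ --- where the facilitation conditions in \eqref{rates} are precisely what prevent a particle from vacating site $i$ or $i+1$ while the other is empty --- is that consideration made explicit. The backward propagation through the a.s.\ finitely many relevant Poisson times, with well-definedness in infinite volume deferred to the graphical construction of \cite{Swart}, is the standard and intended completion.
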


\clem{doublez} implies that if two adjacent sites are empty in
$\eta_\infty$ then they must also be empty in all $\eta_t$, $t\ge0$.
Because of this it is convenient to decompose configurations into {\it
components}---strings of 1's and 0's within which no two adjacent sites
are empty but which are separated from each other by (at least) two
adjacent empty sites.  See \cfig{fig:pimaps}.  (Formally a {\it
component} of a configuration $\eta$ is the restriction $\eta|_I$ of
$\eta$ to an interval $I=\{i,i+1,\ldots,j\}$ for which
$\eta(i)=\eta(j)=1$, $\eta(i-2)=\eta(i-1)=\eta(j+1)=\eta(j+2)=0$, and
there is no site $k$ in $I$ such that $\eta(k)=\eta(k+1)=0$.)  We let
$c(\eta)$ denote the number of components in $\eta$, and write $\PNn$ for
the measure $\PN$ conditioned on the event $c(\eta_0)=n$.

\begin{theorem}\label{main} For all $L$ and $N$, with $N<L/2$, the
measure $\muN_\infty$ is independent of $p$, and so is given by
\cthm{finmeas}(b).  \end{theorem}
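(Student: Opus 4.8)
The plan is to prove that $\muN_\infty(\{\xi\})$ is independent of $p$ for every frozen $\xi\in\XN$; since the continuous-time chain and its embedded jump chain have the same absorbing distribution, all $p$-dependence lives in the relative weights $p$ and $1-p$ of the right/left jumps, and holding times may be ignored. First I would fix the initial combinatorial data: by \clem{doublez} double zeros are never created, so the decomposition of $\eta_0$ into components is intrinsic, and the conditioning event $\{c(\eta_0)=n\}$ together with the finer data --- the cyclic sequence of gap lengths and the particle count of each component --- is a function of $\eta_0$ alone, hence has $p$-independent $\muN$-probability. Conditioned on this data the configurations inside distinct components are independent and each is uniform on the configurations of its component having the prescribed number of particles; in particular the conditional law is reflection symmetric. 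It therefore suffices to show that $\eta_{\infty*}\PNn$, and indeed its refinement by the gap/particle data, is independent of $p$.

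The engine is a left--right cancellation. The model is invariant under the reflection $R$ of the ring combined with $p\mapsto1-p$ (as used in the proof of \clem{exclusive}), and $\muN$ is reflection invariant, so the measure $\muN_\infty$ at parameter $p$ equals the $R$-image of $\muN_\infty$ at parameter $1-p$; this is a free consequence, but it yields only symmetry about $p=1/2$, not full independence. The stronger phenomenon is local: every non-frozen configuration contains a block $11$, and a block $11$ flanked by zeros can be resolved by a rightward jump (rate $p$) or a leftward jump (rate $1-p$), the two outcomes being mirror images about the block. Summing the two incoming flows over the reflection-symmetric ensemble replaces the weights $p$ and $1-p$ by $p+(1-p)=1$; for configurations that freeze in a single jump this already gives the claim and reproduces part of the Catalan weights of \cthm{finmeas}(a). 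To organise the multi-step case I would run the process until the first instant at which the component count drops, which by \clem{doublez} can happen only when a gap of length two is destroyed and two neighbouring components merge. Before that instant the components evolve independently (each driven by its own clocks and coins), so their joint law factorises; I would then decompose $\eta_{\infty*}\PNn$ according to which neighbouring pair merges and the configuration produced, use the cancellation to show that this merge data is $p$-independent, and apply the inductive hypothesis to the resulting configuration, which has $n-1$ components.

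The base case $n=1$ --- the whole ring minus a single gap evolving as one component --- is where the real difficulty concentrates, and it is the step I expect to be the main obstacle. Here the targets need not possess any reflection symmetry (the gap pattern of a frozen $\xi$ may be cyclically asymmetric once $N\ge3$), so the predecessors of $\xi$ do not pair up termwise, and the embedded-chain normalisations, being sums of $p$ and $1-p$ over the currently active sites, do not cancel locally. Thus the clean one-step cancellation must be propagated through the entire trajectory. I expect the way to close this is to match the explicit answer rather than to cancel by hand: one shows that the reflection-symmetric combination of flows forces the conditional final law to satisfy the same $p$-free recursion (in the gap length, equivalently in $N$ and $L$) that produces the product of Catalan numbers in \cthm{finmeas}(a), so that for every $p$ the limit is pinned to its $p=1$ value. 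Establishing this recursion, and checking that the component-merging bookkeeping of the inductive step is consistent with it, is the crux; the global normalisation and the averaging over translates then yield the formula of \cthm{finmeas}(b) for all $p$, which is the assertion of the theorem.
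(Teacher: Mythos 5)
Your induction does not close, and this is the central gap. You run the process to the first merge time and then want to ``apply the inductive hypothesis to the resulting configuration, which has $n-1$ components''---but the inductive statement (like the theorem itself) asserts $p$-independence of the absorption law only when the \emph{initial} law is the uniform measure conditioned on the component count. The law of the configuration at the first merge time is a dynamically generated measure, not of this form, and for a fixed configuration (or a generic $p$-independent initial law) the absorption distribution genuinely depends on $p$; so even if the merge data (which pair merges, and the configuration produced) were $p$-independent, the hypothesis could not be applied to what comes after the merge. Moreover that $p$-independence of the merge data is itself only asserted: your reflection-cancellation argument, as you concede, works termwise only for one-step-to-freeze events, and the ``$p$-free recursion'' that is supposed to propagate it through whole trajectories is never exhibited. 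As written, the crux of your proof is a conjecture.

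You have also inverted where the difficulty lies. By \clem{doublez} double zeros are never created, so the component count is non-increasing in time; hence a single initial component remains a single component and must freeze to the alternating string $1(01)^{N-1}$, whose position is uniform on the ring by translation invariance. Your ``hard base case,'' with cyclically asymmetric gap patterns and Catalan recursions for $N\ge3$, cannot occur: the $n=1$ case is trivial. The paper's proof conditions instead on the \emph{final} configuration $\zeta$. When $c(\zeta)>1$, \clem{doublez} forces two pairs of sites to be empty for \emph{all} time in any history reaching $\zeta$, so the entire evolution factorizes from time $0$ into two independent smaller rings whose initial laws are again uniform-conditioned, with component counts $k_1+k_2=n+1$ and hence each $k_m\le n$; the induction on the initial component count then applies directly, via
$\PNnn(\eta_\infty=\zeta\mid E_{k_1,k_2})=\prod_{m=1,2}\Pkm(\eta_\infty=\pi_m\zeta\mid E^{(m)}_{k_m})$.
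The remaining case $c(\zeta)=1$ of the final configuration is dispatched by complementation ($\PNnn(c(\eta_\infty)=1)=1-\sum_{c(\zeta)\ge2}\PNnn(\eta_\infty=\zeta)$) and translation invariance, with no computation at all. This conditioning-on-the-final-state factorization is precisely the idea your proposal is missing; it replaces both your unproven merge-data cancellation and your unestablished Catalan-matching step.
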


\begin{proof}We will prove by induction on $n$, $n=1,2,\ldots$, that for
all $L$ and $N$ with $L/2>N\ge n$ the distribution of $\eta_\infty$ under
$\PNn$ is independent of $p$.  The theorem then follows from
$\PN(\cdot)=\sum_{n=1}^N\PNn(\cdot)\PN(c(\eta_0)=n)$, since the
distribution $\muN$ of $\eta_0$ is independent of $p$.  The case $n=1$ of
the induction is trivial: if the initial configuration has a single
component then so does the final one, and for any $p$ this component is
just $101\cdots01$ (with $N$ 1's) and its position will be uniformly
distributed over the ring, by translation invariance.

We now assume inductively that $n$ is such that for all $k\le n$ and all
$L,N$ with $L/2>N\ge n$, the distribution of $\eta_\infty$ under $\Pk$ is
independent of $p$.  We then fix a configuration $\zeta\in\XN$ and show
that $\PNnn(\eta_\infty=\zeta)$ is independent of $p$; we may assume that
no two consecutive sites are occupied in $\zeta$, since otherwise this
probability is 0.  Consider first the case $c(\zeta)>1$; then if
necessary we may rotate $\zeta$ (which does not affect the conclusion) to
an orientation in which there exists a site $i$, with $3\le i\le L-3$,
such that $\zeta(0)=\zeta(1)=0$, $\zeta(i)=\zeta(i+1)=0$, and
$\zeta(j)=1$ for at least one $j$ in the set $\{2,\ldots,i-1\}$ and at
least one $j$ in $\{i+2,\ldots,L-1\}$.  Given $i$, we define maps
$\pi_1,\pi_2:\XN\to\bigcup_{0\le N'\le N}X^{(N')}$ by
$\pi_1\eta=\eta\ind_{\{2,\ldots,i-1\}}$ and
$\pi_2\eta=\eta\ind_{\{i+2,\ldots,L-1\}}$, i.e.,
 \begin{align}
(\pi_1\eta)(j)&=\begin{cases}
  \eta(j),& \text{if $2\le j\le i-1$,}\\
  0,&\text{if $0\le j\le 1$ or $i\le j\le L-1$},\end{cases}\\
(\pi_2\eta)(j)&=\begin{cases}
  \eta(j),& \text{if $i+2\le j\le L-1$,}\\
   0,&\text{if $0\le j\le i+1$.}\end{cases}
 \end{align}
  See \cfig{fig:pimaps}. 

\begin{figure}[ht!]  \begin{center}
\includegraphics[width=4.5truein]{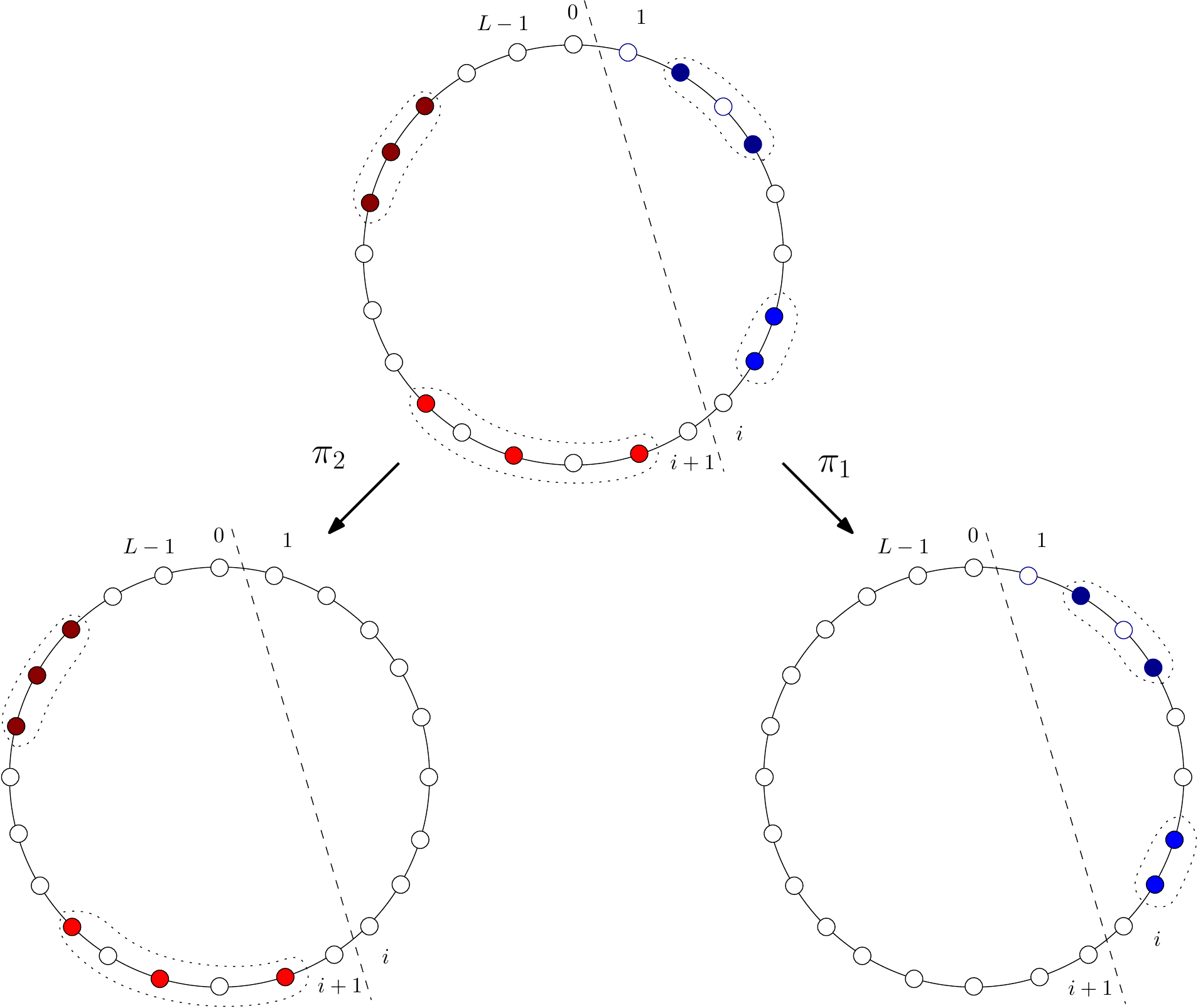} \caption{A typical
configuration in $E_{2,2}$ (see below) of the finite-volume
F-ASEP, with its image under the maps $\pi_1$ and $\pi_2$.  The four
components are enclosed in dotted loops and are also indicated by color
coding.} \label{fig:pimaps} \end{center} \end{figure}

 For integers $k_1,k_2$ with
$1\le k_m\le|\pi_m\zeta|$, satisfying $k_1+k_2=n+1$, define the events
 \begin{align*}
 E_{k_1,k_2}&:=\{\eta_0\in\XN\mid\eta_0(0)=\eta_0(1)=\eta_0(i)=\eta_0(i+1)=0,\\
 &\hskip75pt|\pi_m\eta_0|=|\pi_m\zeta|,\text{ and }
    c(\pi_m\eta_0)=k_m,\ m=1,2\},\\
\noalign{\noindent and for $m=1,2$,}
  E^{(m)}_{k_m}&:=\{\eta_0\in X^{(|\pi_m\zeta|)}\mid\eta_0=\pi_m\eta_0
  \text{ and }     c(\eta_0)=k_m\}.
 \end{align*}
 We will assume in what follows that $k_1,k_2$ are chosen so that
$\PNnn(\eta_\infty=\zeta\mid E_{k_1,k_2})\ne0$.  Since
 \be
\PNnn(\eta_\infty=\zeta)
   =\sum_{k_1,k_2}\PNnn(\eta_\infty=\zeta\mid E_{k_1,k_2})\PNnn(E_{k_1,k_2})
 \ee
  and $E_{k_1,k_2}$ depends only on the initial configuration $\eta_0$,
it suffices to show that for all such $k_1,k_2$,
$\PNnn(\eta_\infty=\zeta\mid E_{k_1,k_2})$ is independent of $p$.

Now observe that
 \be\label{ind}
 \PNnn(\eta_\infty=\zeta\mid E_{k_1,k_2})
  =\prod_{m=1,2}\Pkm(\eta_\infty=\pi_m\zeta\mid E^{(m)}_{k_m}).
 \ee
 This is because (i)~whatever the value of $i$ used to define $\pi_1$ and
$\pi_2$, $\pi_1\eta_0$ and $\pi_2\eta_0$ are independent (and hence
remain so after conditioning on $E_{k_1,k_2}$), and (ii)~since for all
histories $\eta_t$ arriving at $\eta_\infty=\zeta$, \clem{doublez}
implies that the sites $0$, $1$, $i$, and $i+1$ must always be empty,
the probability on the left, given $\eta_0$, is determined by the SAPP
times $t_j^{(i',\#)}$ for $2\le i'\le i-1$ and $i+2\le i'\le L-1$, and
these are independent.  But also we have
 \be\label{cond}
 \Pkm(\eta_\infty=\pi_m\zeta\mid E^{(m)}_{k_m})
 = \frac{\Pkm(\eta_\infty=\pi_m\zeta)}{\Pkm(E^{(m)}_{k_m})},
 \ee
 since, with probability 1 with respect to
$\Pkm$, $\eta_\infty=\pi_m\zeta$ is possible only if $E^{(m)}_{k_m}$
occurs.  Both the numerator and denominator on the right hand side of
\eqref{cond} are independent of $p$, the numerator by the inductive
assumption and the denominator since it involves only the initial
condition.  Thus $\Pkm(\eta_\infty=\pi_m\zeta\mid E^{(m)}_{k_m})$ and
hence, by \eqref{ind}, $ \PNnn(\eta_\infty=\zeta\mid E_{k_1,k_2})$, are
independent of $p$.

 This completes the proof of the $p$-independence of
$\PNnn(\eta_\infty=\zeta)$ in the case $c(\zeta)>1$.  From the validity
of this result for all such $\zeta$ it follows that
 \be
 \PNnn(c(\eta_\infty)=1)
  =1-\sum_{l=2}^N\sum_{\{\zeta\mid c(\zeta)=l\}}\PNnn(\eta_\infty=\zeta)
 \ee
 is also independent of $p$.  Then by  translation invariance,
$\PNnn(\eta_\infty=\zeta)=\PNnn(c(\eta_\infty)=1)/L$ if $c(\zeta)=1$.
This completes the proof.  \end{proof}

\subsection{The partially asymmetric model in infinite volume\label{ivpasep}}

In this section we return to the ($p$-dependent) F-ASEP dynamics on
$\bbz$; we assume that $\eta_0$ is distributed as the Bernoulli measure
$\murho$, $0<\rho<1/2$, and show that the distribution of the limiting
configuration $\eta_\infty$ is independent of $p$.  As in \csect{tasep}
we write $\Prho$ for the measure $\Pmurho$ on the sample space $\Omega$
of \eqref{OmegaP}.  We begin with two preliminary results; the first is
standard.

\begin{lemma}\label{mixing}If $f:\Omega\to X$ commutes with
  translations then $f_*\Prho$ is mixing under translations.
\end{lemma}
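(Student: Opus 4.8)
The plan is to exhibit $(\Omega,\Prho)$ as a Bernoulli (i.i.d.) shift indexed by $\bbz$ on which $\tau$ acts as the one-step shift, to invoke the standard fact that such shifts are strongly mixing, and then to push mixing forward along the equivariant map $f$. First I would make the translation action on $\Omega=X\times\Omega_0$ explicit: on the $X$-factor $\tau$ shifts the configuration $\eta$, while on $\Omega_0=\prod_i(\T^{(i,r)}\times\T^{(i,l)})$ it relabels site $i$ to $i+1$, carrying $\T^{(i,\#)}$ to $\T^{(i+1,\#)}$. Since $\murho$ is Bernoulli and every site of $\Omega_0$ carries the same pair of Poisson rates $(p,1-p)$, both $\murho$ and $\Pp$ are $\tau$-invariant, and hence so is $\Prho=\murho\times\Pp$. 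Grouping the data at site $i$ into a single super-coordinate $(\eta(i),\T^{(i,r)},\T^{(i,l)})$, the measure $\Prho$ becomes an i.i.d.\ product over $i\in\bbz$ valued in a standard Borel alphabet, and $\tau$ is exactly the shift of this product.

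Next I would record that such an i.i.d.\ shift is strongly mixing, i.e.\ $\Prho(A'\cap\tau^{-n}B')\to\Prho(A')\Prho(B')$ as $n\to\infty$ for all measurable $A',B'\subseteq\Omega$. The standard argument (which I would recall only if required) is to approximate $A'$ and $B'$ in $L^1(\Prho)$ by events depending on finitely many super-coordinates; for $n$ large the two finite index sets are disjoint, so independence gives exact factorization, and the approximation errors vanish in the limit.

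Finally I would transfer mixing to $f_*\Prho$. Because $f\circ\tau=\tau\circ f$, for any measurable $B\subseteq X$ one has $f^{-1}(\tau^{-n}B)=\tau^{-n}\bigl(f^{-1}(B)\bigr)$; in particular $\tau_*f_*\Prho=f_*\tau_*\Prho=f_*\Prho$, so $f_*\Prho$ is itself $\tau$-invariant. Then for measurable $A,B\subseteq X$, writing $A'=f^{-1}(A)$ and $B'=f^{-1}(B)$,
\[
f_*\Prho(A\cap\tau^{-n}B)=\Prho\bigl(A'\cap\tau^{-n}B'\bigr)\longrightarrow\Prho(A')\,\Prho(B')=f_*\Prho(A)\,f_*\Prho(B),
\]
which is precisely mixing of $f_*\Prho$ under translations.

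There is no serious obstacle here; the lemma is of the ``standard'' type flagged in the text. The only points needing care are the bookkeeping that $\tau$ preserves $\Prho$ (so that one is speaking of a measure-preserving system at all) and the equivariance identity $f^{-1}\circ\tau^{-n}=\tau^{-n}\circ f^{-1}$, which is exactly what lets the mixing of the product measure $\Prho$ be inherited by its factor $f_*\Prho$. The genuinely probabilistic input, mixing of the i.i.d.\ shift, is entirely classical.
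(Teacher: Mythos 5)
Your proof is correct and follows essentially the same route as the paper: both identify $\Prho$ as an i.i.d.\ product over the site-indexed coordinates $\bigl(\eta(i),\T^{(i,r)},\T^{(i,l)}\bigr)$, hence mixing under the shift, and then transfer mixing to $f_*\Prho$ via the equivariance identity $f^{-1}(\tau^n B)=\tau^n f^{-1}(B)$. Your write-up merely spells out the standard finite-dimensional approximation that the paper compresses into ``$\Prho$ is certainly mixing under translations.''
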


\begin{proof}Note that $\Omega$ is in fact a product space,
$\Omega=\prod_{i\in\bbz}\bigl(\{0,1\}\times\T^{(i,r)}\times\T^{(i,l)}\bigr)$,
and that $\Prho$ is a product measure.  Thus $\Prho$ is certainly mixing
under translations.  Then for any measurable sets $A,B\subset X$,
 \begin{align*}
\lim_{n\to\infty}f_*\Prho(A\cap\tau^nB)
&=\lim_{n\to\infty}\Prho(f^{-1}(A\cap\tau^nB))\\
&=\lim_{n\to\infty}\Prho(f^{-1}(A)\cap\tau^nf^{-1}(B))\\
&=\Prho(f^{-1}(A))\Prho(f^{-1}(B))\\
&=   f_*\Prho(A)f_*\Prho(B).\qedhere
 \end{align*}
\end{proof}
 
\begin{lemma}\label{limit2}$\eta_\infty:=\lim_{t\to\infty}\eta_t$ exists,
and is frozen, $\Prho$-almost surely.
\end{lemma}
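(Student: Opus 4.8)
The plan is to reduce the lemma to the single claim that, $\Prho$-almost surely, every site $x$ changes its state only finitely often, i.e.\ that jumps occur across each edge $(x,x+1)$ only finitely many times. Granting this, $\eta_\infty(x)=\lim_{t\to\infty}\eta_t(x)$ exists for every $x$, so $\eta_\infty$ exists in the product topology; and $\eta_\infty$ must be frozen, for if it contained a block $11$ then the boundary of the maximal run of $1$'s containing that block would (since the density is $\rho<1$, so not all sites are occupied) present a local pattern $11\,0$ or $0\,11$ enabling a jump at rate $p$ or $1-p$, at least one of which is positive. Such a jump would a.s.\ eventually occur, contradicting the fact that the finitely many sites involved have stopped changing.

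The engine of the argument is \clem{doublez}, which I read as the assertion that the indicator of a double zero at a fixed pair of sites is \emph{nonincreasing} in $t$: double zeros are never created, only destroyed. Two consequences drive the proof. First, inspecting \eqref{rates} shows that a particle can cross a given pair of sites only by first filling one of them, which destroys the double zero there; hence a region bounded on both sides by double zeros that are \emph{never} destroyed is dynamically isolated and evolves independently of its exterior. Second, in any fixed finite window the number of double-zero destruction events is at most the (a.s.\ finite) number of double zeros present there at time $0$, so there is a last destruction time after which the double-zero set in the window is constant.

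Next I would use that, by the ergodic theorem (cf.\ \eqref{density}), $\murho$-a.e.\ configuration has double-zero density $(1-\rho)^2>0$, where $\rho<1/2$ guarantees positivity; thus at time $0$ there are infinitely many double zeros on each side of the origin and every initial component is finite. Assuming for the moment that each site $x$ is eventually trapped between two \emph{permanent} double zeros, $x$ eventually lies in a fixed finite isolated component; within such a component the dynamics is a continuous-time Markov chain on finitely many states whose only absorbing states are the alternating strings $1010\cdots1$ (the configurations containing no $11$), and such a chain is absorbed in finite time a.s. This yields the stabilization of $\eta_t(x)$ and hence the lemma.

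The crux --- and the step I expect to be hardest --- is establishing that permanent double zeros exist on both sides of each site, equivalently ruling out an inward cascade in which the destruction of one double zero repeatedly enables the destruction of the next, marching toward $x$ forever. (Note that finiteness of destructions per window is not enough by itself: a particle can leak across a window edge that happens to fall inside a component, so one genuinely needs surviving barriers.) The point to control is that destroying a double zero requires a particle delivered from an adjacent component, which in turn requires that component to have already lost a double zero on its far side; I would quantify this by a first-passage estimate bounding the rate at which a run of consecutive zeros can be eroded from its two ends, and then invoke Borel--Cantelli: since $\murho$ produces arbitrarily long runs of zeros infinitely often on each side, the central double zero of a sufficiently long run survives forever with probability close to $1$, giving a positive density of permanent double zeros and in particular trapping every site a.s. Finally, \clem{mixing}, applied to the translation-commuting map $\eta_0\mapsto\eta_\infty$, is the natural tool for upgrading such almost-sure local statements to the translation-invariant description of $\murho_\infty$ used in the sequel.
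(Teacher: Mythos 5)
Your overall skeleton is exactly that of the paper's proof: find, on each side of any fixed interval, a pair of adjacent sites that are empty for all time (\clem{doublez} guaranteeing that double zeros are never created), observe that the segment between two such permanent barriers is dynamically isolated, and then argue that an isolated finite segment is a finite-state Markov chain that a.s.\ reaches a frozen (absorbing) configuration. But at the step you yourself flag as the crux---the almost-sure existence of permanent double zeros on both sides of every site---your proposal has a genuine gap. You leave it as a program (``a first-passage estimate bounding the rate at which a run of consecutive zeros can be eroded'') rather than a proof, and the program itself is problematic: for general $p\in(0,1)$, controlling erosion of a run of zeros requires bounding the \emph{total} particle transport across an edge over infinite time, not just the net flux, and doing this before knowing the system freezes is essentially as hard as the lemma itself. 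Moreover, the Borel--Cantelli step you invoke does not apply as stated: concluding that infinitely many long runs a.s.\ contain a surviving central double zero from ``each survives with probability close to $1$'' requires independence (or second-moment control) of the survival events, which they do not have---survival of a given run depends on the entire surrounding configuration and driving noise.

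The paper closes exactly this gap with a soft argument that needs no quantitative estimate, and you in fact had all the ingredients in hand. Set $\theta_t(k)=1$ if $\eta_t(k)=\eta_t(k+1)=0$ and $\theta_t(k)=0$ otherwise. By \clem{doublez}, $t\mapsto\theta_t$ is pointwise nonincreasing, so $\theta_\infty=\lim_{t\to\infty}\theta_t$ exists \emph{unconditionally}---no stabilization of $\eta_t$ is needed. Since the particle density is conserved, a union bound gives $\Prho\bigl(\theta_t(k)=1\bigr)\ge 1-2\rho$ for every fixed $t$, whence by monotone convergence $\Prho\bigl(\theta_\infty(k)=1\bigr)\ge 1-2\rho>0$. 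Finally, $\theta_\infty:\Omega\to X$ commutes with translations, so \clem{mixing} applies to it directly: $(\theta_\infty)_*\Prho$ is mixing, hence ergodic, and a positive-density ergodic field a.s.\ has $1$'s arbitrarily far out on both sides---i.e., permanent double zeros beyond any $L$. Note that you instead proposed applying \clem{mixing} to the map $\eta_0\mapsto\eta_\infty$, which is circular in this lemma, since the existence of $\eta_\infty$ is precisely what is being proved; the correct object is the monotone limit $\theta_\infty$. (Two smaller points: the paper treats $p\in\{0,1\}$ separately via the F-TASEP analysis of \csect{tasep}, and in the trapped segment one must also note---as the paper does---that the conditioning forces a particle count compatible with freezing, so that an absorbing state is reachable from every state of the finite chain; your absorption claim silently assumes this.)
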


\begin{proof}The cases $p=1$ and $p=0$ follow from the discussion of
\csect{tasep}, so we may suppose that $0<p<1$.  We will show that for any
$L>0$ there exist, $\Prho$-almost surely, two pairs of adjacent sites,
one on each side of the interval $[-L,L]$, which are empty for all times.
The interval between these pairs of sites is isolated from any outside
influence; it is effectively a finite system in which any initial
configuration can, and therefore almost surely will, eventually freeze.  

We now fill in the details of the argument.  For $t\in\bbz_+$ define
$\theta_t:\Omega\to X$ by $\theta_t(k)\bigl(=\theta_t(\omega)(k)\bigr)=1$
if $\eta_t(k)=\eta_t(k+1)=0$, $\theta_t(k)=0$ otherwise.  \clem{doublez}
implies that the sequence $(\theta_t)_{t\in\bbz_+}$ is pointwise
decreasing and so $\theta_\infty=\lim_{t\to\infty}\theta_t$ exists;
$(\theta_\infty)_*\Prho$ is mixing by \clem{mixing} and moreover, since
$\Prho\bigr(\theta_t(k)=1\bigr)\ge1-2\rho$ for all $t$,
$\Prho(\theta_\infty(k)=1)\ge1-2\rho$ by the Monotone Convergence
Theorem.  Thus if for $k,l>0$ we define
$\Omega_{k,l}=\{\omega\in\Omega
\mid \theta_\infty(-k-2)=\theta_\infty(l+1)=1\}$ then for any $L>0$,
$\Prho$-a.e. $\omega\in\Omega$ will lie in $\Omega_{k,l}$ for some
$k,l>L$.

We claim that, $\Prho$-almost surely on $\Omega_{k,l}$,
$\lim_{t\to\infty}\eta_t\big|_{[-k,l]}$ exists and is frozen; by the
previous paragraph this suffices for the result.  Now conditioning on
$\Omega_{k,l}$ simply implies, for the behavior of $\eta_t$ in $[-k,l]$,
that $\eta_0\big|_{[-k,l]}$ has at most $\lfloor(k+l+1)/2\rfloor$ particles
and that no transitions occur across the bonds $\langle-k-1,-k\rangle$
and $\langle l,l+1\rangle$, and with these restrictions there is, from
any initial configuration, a sequence of possible transitions leading to
a frozen configuration.\end{proof}

Now set $\mu_\infty:=\eta_{\infty*}\Prho$; $\mu_\infty$ is mixing by
\clem{mixing}.  Our main result, \cthm{nop} below, is that $\mu_\infty$
does not depend on $p$.

\begin{theorem}\label{nop} For all $\rho$, with $0<\rho<1/2$, the
distribution of $\eta_\infty$ is independent of $p$, and so is given by
\cthm{ftmeas}(b).
\end{theorem}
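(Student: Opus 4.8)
The plan is to establish $p$-independence of $\mu_\infty$ event by event and then read off its explicit form from the totally asymmetric case. Since $\mu_\infty=\eta_{\infty*}\Prho$ is a measure on $X$, it is determined by its values on cylinder sets, so it suffices to show that for every $L>0$ and every event $A$ depending only on $\eta_\infty\rng{-L}{L}$ the probability $\mu_\infty(A)$ does not depend on $p$. Once this is shown, evaluating at $p=1$ and invoking \cthm{ftmeas}(b) identifies $\mu_\infty$ with $\murho_\infty$ in the stated form.

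The essential structural tool is that the dynamics never creates a double zero (\clem{doublez}): the empty pairs recorded by $\theta_\infty$ are present and undisturbed for all $t\ge 0$. Following the isolation argument of \clem{limit2}, I would condition on the locations of the nearest persistent double zeros flanking the window, i.e.\ on the nearest $a<-L<L<b$ with $\theta_\infty(a)=\theta_\infty(b-1)=1$. Because such a pair is never eroded, no particle ever crosses the cuts at $a$ or at $b-1$; hence the interval $I=I(a,b)\supset[-L,L]$ is dynamically isolated, and $\eta_\infty|_I$ (in particular the restriction governing $A$) is a function of $\eta_0|_I$ and the SAPP clocks indexed by sites of $I$ alone. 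On this conditioned event the block $I$ evolves exactly as an F-ASEP on a finite segment with closed ends. By \clem{mixing} the measure $(\theta_\infty)_*\Prho$ is mixing, so persistent double zeros occur with positive density on each side and the conditioning is almost surely well defined; summing over the admissible flank positions $a,b$ reconstructs $\mu_\infty(A)$.

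The $p$-independence of the limiting law on a single closed block is the finite-volume phenomenon of \cthm{main}. A closed segment is the complementary arc of a ring carrying a permanent double-zero gap, and the factorization \eqref{ind}--\eqref{cond} in the proof of \cthm{main} shows precisely that, for each fixed particle number, the distribution of the frozen configuration reached on such an isolated arc is independent of $p$. Granting this, the conditional law of $\eta_\infty|_I$ given the flank positions, the particle number in $I$, and the initial block configuration $\eta_0|_I$ is $p$-independent; since the initial data $\eta_0$ (a Bernoulli sample) carries no $p$-dependence, integrating this conditional law against the distribution of $\eta_0|_I$ and summing over $a,b$ yields a $p$-independent value for $\mu_\infty(A)$.

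I expect the main obstacle to lie in this last integration, because the conditioning event $\{\theta_\infty(a)=\theta_\infty(b-1)=1\}$ is not measurable with respect to $\eta_0$: persistence of a flanking double zero depends on the clocks, and can be destroyed by activity inside the block, so a priori the distribution of the flank positions $a,b$ could itself depend on $p$. The care needed is to decouple this from the interior evolution---most cleanly, I would realize the entire conditioned sum as the limiting law of an auxiliary finite system and apply \cthm{main} wholesale, or equivalently approximate $\mu_\infty$ locally by the finite-ring measures $\muN_\infty$ with $N/L\to\rho$: these are $p$-independent by \cthm{main}, converge to $\mu_\infty$ on cylinders once uniform locality of the persistent double zeros is established (so that finite-size effects vanish uniformly in $p$), and the equivalence of ensembles matches their uniform initial law to $\murho$. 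Passing $p$-independence through this local limit, and then setting $p=1$ to invoke \cthm{ftmeas}(b), completes the proof.
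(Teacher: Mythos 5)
Your architecture matches the paper's: reduce to cylinder events, flank the window by double zeros of $\eta_\infty$ (which by \clem{doublez} are present for all time and isolate the interior), identify the conditional law of the frozen interior with a finite-volume quantity, and invoke \cthm{main}. You have also correctly diagnosed the one step where this plan is in danger: the flanking event is not a function of $\eta_0$, so the joint distribution of the flank positions could a priori depend on $p$. But your proposed resolution of that difficulty does not close it. You suggest approximating $\mu_\infty$ locally by the finite-ring measures $\muN_\infty$ and requiring that ``finite-size effects vanish uniformly in $p$''; this convergence is exactly as hard as the theorem itself---the natural route to proving it is the very flank-decomposition whose $p$-dependence is at issue---and the uniformity in $p$ you ask for is an additional unproven (and, it turns out, unnecessary) demand. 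As stated, the decisive estimate is missing.

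The paper's way out is different and quite specific. Writing $G_i$, $G'_j$ for the events that $\eta_\infty$ has double zeros at the flanks, it decomposes $\Prho(E)=\sum_{i<-l<l<j}\alpha(i,j,l)\,\Prho(G_i\cap G'_j)$, where the conditional weights $\alpha(i,j,l)$ are shown to be $p$-independent via \cthm{main} (after conditioning on the particle number in the block and mapping to the ring, where Bernoulli conditioned on particle number becomes the uniform measure). The flank-position factor is then controlled \emph{not} by proving it $p$-independent for finite separations, but by two facts: $\Prho(G_i)=1-2\rho$ exactly, for every $p$ (since $\eta_\infty$ is frozen with density $\rho$, adjacent occupied sites never occur, so the double-zero probability is forced); and $\mu_\infty$ is mixing (\clem{mixing}), so $\Prho(G_i\cap G'_j)\to(1-2\rho)^2$ as the separation grows, with the \emph{same} $p$-independent limit for every $p$. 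One then compares two fixed values $p,p'$ at a time: taking $l$ larger than both thresholds $l^*_p,l^*_{p'}$, the bound $\sum\alpha(i,j,l)\le 1/\bigl((1-2\rho)^2-\epsilon\bigr)$ (which follows from $\Prho(E)\le1$) gives $|\Prho(E)-\Prhop(E)|\le 2\epsilon/\bigl((1-2\rho)^2-\epsilon\bigr)$, and $\epsilon\to0$ finishes. This is the idea your proposal lacks: no uniformity in $p$ is needed, only a $p$-independent asymptotic anchor for the flank distribution, supplied by the exact density $1-2\rho$ of frozen double zeros together with mixing.
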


\begin{proof} Let $I\subset\bbz$ be an interval of integers, let
$\zeta\in\{0,1\}^I$ be a configuration on $I$, and let $E$ be the event
that $\eta_\infty\big|_I=\zeta$. We will show that $\Prho(E)$ is
independent of $p$, proving the result.  We may assume that $\zeta$
contains no pair of adjacent occupied sites, since otherwise
$\Prho(E)=0$.

Choose $l\in\bbn$ so large that
$I\subset [-l,l]$.  Then, since $\mu_\infty$ is mixing and hence ergodic,
and in $\mu_\infty$ there is a strictly positive density $1-2\rho$ of
pairs of adjacent empty sites, there must $\Prho$-almost surely exist sites
$i$ and $j$, with $i<-l$ and $j>l$, such that
$\eta_\infty(i-1)=\eta_\infty(i)=\eta_\infty(j)=\eta_\infty(j+1)=0$.
Focusing on the maximal such $i$ and minimal such $j$ leads to the
representation
 \be\label{union}
 E=\bigcup_{i<-l<l<j} F_i\cap E\cap F'_j,
 \ee
 where for some $m,m'\ge0$,
$F_i:=\{\eta_\infty(i-1{:}-l-1)=00(10)^m\hbox{ or } 00(10)^m1\}$ and
$F'_j:=\{\eta_\infty(l+1{:}j+1)=(01)^{m'}00\hbox{ or }1(01)^{m'}00\}$.  Since
\eqref{union} is a disjoint union,
 \be\label{shel}\begin{aligned}
\Prho(E)
 &=\sum_{i<-l<l<j}\Prho(F_i\cap E\cap F'_j)\\
 &= \sum_{i<-l<l<j}
    \alpha(i,j,l)\Prho(G_i\cap G'_j).
\end{aligned}\ee
 where $G_i:=\{\eta_\infty(i)=\eta_\infty(i-1)=0\}$,
$G'_j:=\{\eta_\infty(j)=\eta_\infty(j+1)=0\}$, and
$\alpha(i,j,l):=\Prho(F_i\cap E\cap F'_j\mid G_i\cap G'_j)$. 

 We show below, and assume for the moment, that as the notation
indicates, $\alpha(i,j,l)$ is independent of $p$.  Now fix $\epsilon>0$;
since $\Prho(G_i)=\Prho(G'_j)=1-2\rho$, \clem{mixing} implies that there
is an $l^*_p$ such that
 \be\label{miximp}
   |\Prho(G_i\cap G'_j)-(1-2\rho)^2|<\epsilon \text{ for $l\ge l^*_p$.}
 \ee
Now \eqref{shel} and \eqref{miximp} imply that if $\epsilon<(1-2\rho)^2$,
 \be
\sum_{i<-l<l<j}\alpha(i,j,l)
     <\frac1{(1-2\rho)^2-\epsilon}.
 \ee
 But then for any $p,p'$ with $0\le p,p'\le1$ we have for
 $l>\max(l^*_p,l^*_{p'})$, 
 \be
 |\Prho(E)-\Prhop(E)|<2\epsilon\sum_{i<-l<l<j}\alpha(i,j,l)
  \le\frac{2\epsilon}{(1-2\rho)^2-\epsilon}.
 \ee
 Since $\epsilon$ is arbitrary, $\Prho(E)=\Prhop(E)$.
 
To show that $\alpha(i,j,l)=\Prho(F_i\cap E\cap F'_j\mid G_i\cap G'_j)$ is
independent of $p$ we appeal to \cthm{main}.  Let $L=j+1-i$, let $N$
denote the number of particles in $\eta_0$ which lie in the interval
$[i-1,j+1]$, and let $H_n=\{N=n\}$.  If $G_i$ and $G'_j$ occur then
necessarily $N<L/2$, and we may write
 \be\label{decomp} \begin{aligned}
 \Prho(F_i\cap E&\cap F'_j\mid G_i\cap G'_j)\\
 &= \sum_{n<L/2} \Prho(F_i\cap E\cap F'_j\mid G_i\cap G'_j\cap H_n)
\Prho(H_n).
 \end{aligned}\ee
  Now consider a system with $n$ particles on a ring of $L$ sites, which
for convenience we label as $i,i+1,\ldots j$; the state space is
$X^{(n)}\subset\{0,1\}^{[i,j]}$ and there is a natural map
$\chi_n:H_n\to X^{(n)}$ given by restriction.  Further, 
 \be\label{equal}
   \Prho(F_i\cap E\cap F'_j\mid G_i\cap G'_j\cap H_n)
 =  \Pn(\chi_n(F_i\cap E\cap F'_j)\mid\chi_n(G_i\cap G'_j)),
 \ee
 since, under the conditioning on $G_i\cap G'_j$ and $\chi_n(G_i\cap G'_j)$,
respectively, if some initial condition and sequence of particle jumps
in the system on $\bbz$ produces $G_\theta\cap E\cap G'_\sigma$ then the
corresponding initial condition and sequence of jumps in the finite system
will produce $\chi_n(G_\theta\cap E\cap G'_\sigma)$.  Since the right hand
side of \eqref{equal} is independent of $p$ by \cthm{main}, so is
$\Prho(F_i\cap E\cap F'_j\mid G_i\cap G'_j)$, by \eqref{decomp}.
\end{proof}
\noindent

\section{The high density region\label{highrho}}

We now turn to consideration of the TIS measures for the F-ASEP with
density $\rho>1/2$.  By \cthm{contsum} such measures are supported on the
set $G\subset X$ of configurations with no two adjacent holes, so in this
section we will regard the F-ASEP as a Markov process on $G$, and write
$\M(G)$ for the space of TI probability measures on $G$.  We will prove:

\begin{theorem}\label{gibbs}For each $\rho>1/2$ there is a unique TIS
measure with density $\rho$ for the F-ASEP. \end{theorem}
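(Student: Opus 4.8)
The plan is to prove both existence and uniqueness by transporting Liggett's classification of the translation-invariant stationary (TIS) measures of the ordinary ASEP across an explicit correspondence between $G$ and the ASEP configuration space. The starting point is the block decomposition of configurations in $G$: since no two holes are adjacent, every $\eta\in G$ is uniquely a bi-infinite concatenation of the two blocks $1$ and $01$ (each $0$ grouped with the particle that must follow it, each remaining $1$ standing alone). Reading $1\mapsto1$ and $01\mapsto0$ identifies $\eta$ with an ASEP configuration $\hat\eta\in\{0,1\}^\bbz$, and conversely the dilation map $\Phi$ sending $\hat\eta$ to $\eta$ via $1\mapsto1$, $0\mapsto01$ lands in $G$ and inverts the decomposition. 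A direct check of the rates in \eqref{rates} shows that $\Phi$ intertwines the two dynamics: an ASEP jump into a neighboring hole, rightward at rate $p$ or leftward at rate $1-p$, corresponds under $\Phi$ to exactly one F-ASEP jump of the same orientation and rate, the occupied site required behind the jumping particle being automatically supplied because every block ends in $1$. A window of $n$ ASEP sites of density $\rhohat$ dilates to about $n(2-\rhohat)$ F-ASEP sites still carrying $n$ particles, so $\Phi$ sends density $\rhohat$ to density $\rho=1/(2-\rhohat)$; as $\rhohat$ ranges over $(0,1)$ this $\rho$ ranges over $(1/2,1)$, so each target $\rho$ arises from the single value $\rhohat=2-1/\rho$.

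The measure-level correspondence is the delicate part, and here I would use a coupling rather than a naive pushforward. Because $\Phi$ stretches space non-uniformly, $\Phi$ applied to a translation-invariant ASEP measure is not itself translation invariant, and, as noted in \csect{intro}, labeling blocks to repair this produces measures that cannot be normalized. I would instead relate the two models through their origin-conditioned (Palm) versions: an F-ASEP TIS measure $\mu$ on $G$ is in bijection with its conditional law given a block boundary at the origin, the block decomposition carries this conditional law to the corresponding origin-conditioned ASEP law, and the two re-symmetrization procedures that recover the full translation-invariant measures are matched by the deterministic length change of each block. Concretely I would build both processes on a common probability space---the labeling-free coupling alluded to in \crem{construction}---so that at all times the F-ASEP configuration is the $\Phi$-image of the ASEP configuration read off from a single marked block; the generator intertwining of the first paragraph guarantees that this relation is preserved by the dynamics, whence $\mu$ is stationary for the F-ASEP if and only if the associated ASEP law is stationary, and likewise for the translation invariance of the conditioned laws.

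With the correspondence in hand the theorem follows quickly. By Liggett's theorem \cite{Liggett} the only TIS measure of the ASEP with density $\rhohat\in(0,1)$ is the Bernoulli measure $\nu_{\rhohat}$ (the non-translation-invariant blocking states are irrelevant, since we work with TI measures). For existence, $\nu_{\rhohat}$ with $\rhohat=2-1/\rho$ transports through the coupling to a TIS measure on $G$ of density $\rho$, in which one recognizes the nearest-neighbor hard-core Gibbs measure named in the statement. For uniqueness, let $\mu$ be any TIS measure of density $\rho$; by \cthm{contsum} it is supported on $G$, so the coupling associates to it a TIS ASEP measure of density $\rhohat=2-1/\rho$, which by Liggett must be $\nu_{\rhohat}$; running the correspondence backward pins $\mu$ down uniquely.

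The main obstacle I anticipate is entirely in the second paragraph: verifying that the coupling genuinely furnishes a bijection between TIS measures that is simultaneously compatible with stationarity and with translation invariance, in spite of the non-uniform dilation and the attendant normalization issue. In particular one must show that an \emph{arbitrary} F-ASEP TIS measure---not assumed a priori to carry any renewal or product structure---pulls back to a bona fide translation-invariant ASEP measure, so that Liggett's rigidity applies and forces uniqueness; carrying out the origin-conditioning and the change-of-density bookkeeping rigorously is where the real content of the proof lies.
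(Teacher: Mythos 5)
Your overall route is the paper's own: the same block decomposition (your $1\mapsto1$, $0\mapsto01$ is the paper's $1\to1$, $0\to10$ up to a reflected convention), the same density bookkeeping $\rho=1/(2-\rhohat)$, the same repair of the non-translation-invariant pushforward (your origin-conditioned/Palm re-symmetrization is exactly the paper's definition $\Phi(\muhat):=\rho\bigl(\phi_*\muhat+\tau_*^{-1}\phi_*(\ind_{G_{10}}\muhat)\bigr)$), a label-free coupling built from Poisson clocks attached to particles, and finally Liggett's rigidity for the ASEP. But there is a genuine gap at precisely the step you defer: you assert that ``a direct check of the rates shows that $\Phi$ intertwines the two dynamics,'' and later that this generator intertwining ``guarantees that this relation is preserved by the dynamics.'' No such direct check is available. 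The pointwise map $\phi$ does \emph{not} commute with the dynamics: each jump displaces the F-ASEP configuration relative to $\phi$ of the current ASEP configuration, so the coupling can only assert that $\eta_t$ is a \emph{translate} of $\phi(\zeta_t)$ by a random, time-dependent shift---this is all the paper's coupling theorem claims---and $\Phi$ acts on measures through the nontrivial re-symmetrization, so there is no generator identity $L\circ\Phi=\Phi\circ\hat L$ to verify. Your second and third paragraphs therefore assume the semigroup identity $\Phi(\muhat)S(t)=\Phi\bigl(\muhat\hat S(t)\bigr)$ rather than proving it.

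The missing idea, which the paper supplies and for which your sketch has no substitute, is the upgrade from ``translate of'' to equality of measures via ergodicity: (i) $S(t)$ and $\hat S(t)$ preserve ergodicity, since $\mu S(t)$ is a covariant image of the product $\mu\times\Pp$ and the product of an ergodic system with a weakly mixing one is ergodic; (ii) $\Phi$ preserves ergodicity, being an affine bijection of the translation-invariant measures; (iii) hence for ergodic $\muhat$ the two candidates $\nu_t=\Phi(\muhat)S(t)$ and $\widetilde\nu_t=\Phi\bigl(\muhat\hat S(t)\bigr)$ are each ergodic, thus either equal or mutually singular; (iv) restricting the coupling measure to an event $B_m=\{(\zeta,\eta)\mid\eta=\tau^m\phi(\zeta)\}$ of positive probability produces a nonzero measure dominated by $\nu_t$ and, after applying $\tau^m_*$ and using translation invariance, also by $\widetilde\nu_t$, which rules out mutual singularity and forces $\nu_t=\widetilde\nu_t$. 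Without an argument of this kind your uniqueness step---that an \emph{arbitrary} TIS measure on $G$ pulls back to a TIS ASEP measure---does not follow; as you yourself note, that is where the entire content of the theorem lies. (A second, more routine omission: the equivalence of your particle-attached clocks with the standard site-attached Harris construction also requires proof; the paper devotes its appendix to exactly this.)
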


\noindent
This result was established for the symmetric ($p=1/2$) model in
\cite{BES}.  

Some context for the result arises from a familiar equilibrium
statistical mechanical system of particles on a one-dimensional lattice,
sometimes referred to as the {\it nearest-neighbor hard core} model, in
which the only interaction is an infinitely strong repulsion between
particles on adjacent sites, so that the possible configurations are
those with no two particles adjacent.  When this system is considered on
a ring all configurations satisfying this restriction are equally likely,
and in the thermodynamic limit there is a unique (for given density
$\rho$) Gibbs measure.  If we exchange the roles of particles and holes
we obtain from this a measure $\mubar$ supported on $G$, and this measure
is a TIS state for the F-ASEP, whatever the asymmetry---which must, of
course, be the unique such state identified in \cthm{gibbs}.

Results identifying all stationary states of particle systems as
canonical Gibbs measures have been established in fairly general contexts
in \cite{Georgii} and \cite{Van} (see also \cite{Spohn} for a review of
the situation).  These results, however, require that the rates satisfy a
detailed balance condition, which ours do not unless $p=1/2$, and even in
this symmetric case certain non-degeneracy hypotheses on the rates
exclude the F-SSEP.  Rather than attempting to extend or modify the
arguments of these papers we give an independent proof of the theorem for
the F-ASEP, based on a coupling with the Asymmetric Simple Exclusion
Model (ASEP).  

Recall \cite[Chap. VIII]{Liggett} that the ASEP has configuration space
$Y\,(=X)= \{0,1\}^\bbz$; we will write a typical configuration in $Y$ as
$\zeta=\bigl(\zeta(i)\bigr)_{i\in\bbz}$ and write $\M(Y)$ for the space
of TI probability measures on $Y$.  The ASEP dynamics is defined in
parallel with that of the F-ASEP (see \csect{models}), using the same
SAPPs $\bigl((i,t^{(i,r)}_j)\bigr)_{j=1,2,\ldots}$ and
$\bigl((i,t^{(i,l)}_j)\bigr)_{j=1,2,\ldots}$ and requiring that a
particle jump from $i$ to $i+1$ at $t=t^{(i,r)}_j$ if
$\eta_{t-}(i)=1-\eta_{t-}(i+1)=1$ and from $i$ to $i-1$ at
$t=t^{(i,l)}_j$ if $\eta_{t-}(i)=1-\eta_{t-}(i-1)=1$.  See \cite{Liggett}
for more details, including a specification of the generator $\hat L$;
we will use below the evolution operator $\hat S(t)=e^{\hat Lt}$, whose
action on measures is defined in parallel to that of $S(t)$, defined in
\csect{models} .  It is known \cite[Theorem~VIII.3.9(a)]{Liggett} that
for $\0\le\rhohat\le1$ the Bernoulli measure is the unique TIS state of
density $\rhohat$ for the ASEP.

To define the coupling we first introduce the map $\phi:Y\to G$ defined
by the substitutions $1\to1$, $0\to10$; more specifically, for
$\zeta\in Y$,
 \be\label{subs}
\phi(\zeta)
  =\cdots\psi(\zeta(-1))\psi(\zeta(0))\psi(\zeta(1))\psi(\zeta(2))\cdots,
 \ee
 where $\psi(1)=1$, $\psi(0)=10$, and the substitution is made so that
$\psi(\zeta(1))$ begins at site 1.  Further, we define
$\gamma_\zeta:\bbz\to\bbz$ so that $\gamma_\zeta(i)$ is the initial site
of the string $\psi(\zeta(i))$ which is substituted for $\zeta(i)$ under
$\phi$: $\gamma_\zeta(i)=2i-\sum_{j=1}^i\zeta(j)$ if $i\ge1$, 
$\gamma_\zeta(i)=2i-1+\sum_{j=i}^0\zeta(j)$ if $i\le0$.  For example, if
$\zeta\rng{-1}{4}=0\,1\,1\,0\,0\,1$ then
$\phi(\zeta)\rng{-2}{6}=1\,0\,1\,1\,1\,0\,1\,0\,1$ and
$\gamma_\zeta(-1)=-2$, $\gamma_\zeta(0)=0$, $\gamma_\zeta(1)=1$,
$\gamma_\zeta(2)=2$, $\gamma_\zeta(3)=4$, and $\gamma_\zeta(4)=6$; see
\cfig{fig:gamma}.  $\phi$ is clearly a bijection of $Y$ with $G_1$, where
$G_\sigma$ denotes the set of configurations $\eta\in G$ with
$\eta(1)=\sigma$; we write $\phi^{-1}:G_1\to Y$ for the inverse of this
bijection.

\begin{figure}[ht!]  \begin{center}
\includegraphics[width=5truein]{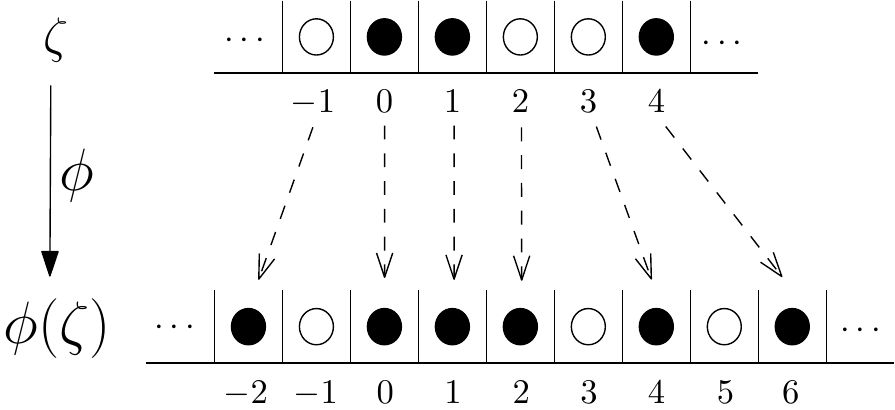}

 \caption{Portion of a typical configuration $\zeta$, with
$\zeta\rng{-1}{4}=0\,1\,1\,0\,0\,1$, and its image under $\phi$.  The
dashed arrows show how $\gamma_\zeta$ maps the sites for $Y$:
$\gamma_\zeta(-1)=-2$, etc.} \label{fig:gamma} \end{center} \end{figure}

Suppose now that $\muhat\in\M(Y)$ and that $\muhat$ has density
$\rhohat$.  $\phi_*\muhat$ cannot be TI, since it is supported on
$G_1$.  However, $\phi$ does give rise to a map $\Phi:\M(Y)\to\M(G)$,
obtained as follows.  Write $G_1=G_{10}\cup G_{11}$, where
$G_{\sigma\sigma'}:=\{\eta\in G\mid\eta(1)=\sigma,\eta(2)=\sigma'\}$.
$G_0=G\setminus G_1$ is just the translate $\tau^{-1}G_{10}$, and for
$\muhat\in\M(Y)$, $\Phi(\muhat)$ on $G_0$ should be just the translate of
$\phi_*(\ind_{G_{10}}\muhat)$.  This leads us to define
 \be\label{defPhi}
\Phi(\muhat)
 :=\rho\Bigl(\phi_*\muhat + \tau_*^{-1}\phi_*(\ind_{G_{10}}\muhat)\Bigr).
 \ee
 $\Phi(\muhat)$ is then easily seen to be TI.  The normalizing constant
$\rho$ has value $\rho=1/(2-\rhohat)$, since $\muhat(Y)=1$ and
$\muhat(G_{10})=1-\rho$; $\rho$ is also the density $\Phi(\muhat)(G_1)$
of $\Phi(\muhat)$, since $\phi_*\muhat(G_1)=1$ and
$G_{10}\cap\tau G_1=\emptyset$.  $\Phi:\M(Y)\to\M(G)$ is a bijection with
inverse
$\Phi^{-1}(\mu)=\mu(G_1)^{-1}\phi^{-1}_*\bigl(\mu\big|_{G_1}\bigr)$.
Moreover, $\Phi$ preserves convex combinations and this, with the
invertibility of $\Phi$, implies that $\muhat$ is ergodic (i.e.,
extremal) if and only if $\Phi(\muhat)$ is.  Finally, as we shall see in
\cthm{TIS} below, $\Phi$ also commutes with the time evolutions in the
two systems.

\begin{remark}\label{FTI} As noted in Section 3, the TIS states of the
F-ASEP at low density, $0<\rho<1/2$, are precisely the TI measures
supported on $F$.  Such measures may be obtained by defining a map from
$X$ to $F$ via the substitutions $0\to0$, $1\to01$, in parallel with
\eqref{subs}; then as with \eqref{defPhi} we obtain a bijective
correspondence between the set of all TI measures on $X$ of density
$\rhobar$ and the set of TI measures on $F$ of density
$\rhobar/(1+\rhobar)$. \end{remark}

 For $\zeta\in Y$ we let $K_\zeta=\zeta^{-1}(1)$ be the set of particle
locations in $\zeta$, and let $(k_i)_{i\in\bbz}$ be an ordered
enumeration of $K_\zeta$ ($k_i<k_{i'}$ if $i<i'$).  If $\eta\in G$ is an F-ASEP configuration then
we will refer to certain particles in $\eta$ as {\it true} particles; the
true particles are those which are immediately followed by another
particle.  The mapping $\gamma_\zeta$, when restricted to $K_\zeta$, then
gives a bijective correspondence between the particles in the ASEP
configuration $\zeta$ and the true particles in $\phi(\zeta)$.  Note that
if $\eta\in G$ is any F-ASEP configuration and there exists an ordered
enumeration $(k'_i)_{i\in\bbz}$ of the sites of the true particles in
$\eta$ satisfying
 \be\label{trans}
k'_{i+1}-k'_i=\gamma_\zeta(k_{i+1})-\gamma_\zeta(k_i)\,
  \bigl(=2(k_{i+1}-k_i)-1\bigr)
 \ee
 for all $i$,  then $\eta$ is a translate of $\phi(\zeta)$.

The idea behind the coupling is to establish the correspondence between
particles in the ASEP and true particles in the F-ASEP at time 0, through
$\gamma_{\zeta_0}$, and then to maintain this correspondence as the
configurations evolve.  As a preliminary we introduce a minor
modification of the F-ASEP dynamics: we keep the SAPPs
$\bigl((i,t^{(i,r)}_j)\bigr)_{j=1,2,\ldots}$ and
$\bigl((i,t^{(i,l)}_j)\bigr)_{j=1,2,\ldots}$ introduced in \csect{models}
through \eqref{SAPP}, but replace the exchanges which they trigger by
exchanges corresponding to those in the ASEP.  Thus at a time
$t=t^{(i,r)}_j$ an exchange occurs only if
$\eta_{t-}\rng{i}{i+2}=1\,1\,0$, and then the true particle at $i$
exchanges with the pair $1\,0$ to its right, yielding
$\eta_t\rng{i}{i+2}=1\,0\,1$.  Similarly for $t=t^{(i,l)}_j$: if
$\eta_{t-}\rng{i-2}{i+1}=1\,0\,1\,1$ then the true particle at $i$
exchanges with the pair to its left, yielding
$\eta_t\rng{i-2}{i+1}=1\,1\,0\,1$.  It is clear that these are the same
exchanges which took place in the earlier formulation of the dynamics,
although triggered by Poisson times associated with different sites, so
that the process defined in this way is the same as the F-ASEP process
defined earlier.  A formal proof of this is easily given.

\begin{theorem}\label{coupling} For any $\zeta_0\in Y$ there exists a
process $(\zeta_t,\eta_t)$, with state space $Y\times G$, such that
$\zeta_t$ is the ASEP process with initial configuration $\zeta_0$ and
$\eta_t$ is the F-ASEP process with initial configuration
$\eta_0=\phi(\zeta_0)$.  Moreover, for all $t$, $\eta_t$ is a translation
of $\phi(\zeta_t)$.  \end{theorem}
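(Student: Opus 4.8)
The plan is to construct the coupled process $(\zeta_t,\eta_t)$ explicitly on the common probability space $(\Omega,\Pp)$ built from the SAPPs, using the \emph{modified} F-ASEP dynamics described just before the theorem (in which the Poisson times $t^{(i,r)}_j$, $t^{(i,l)}_j$ trigger exchanges of the true particle at $i$ with the $1\,0$ pair to its right or the $1\,0$ pair to its left). The key is that both $\zeta_t$ and $\eta_t$ are driven by the \emph{same} SAPPs, with the particle-to-true-particle correspondence furnished by $\gamma_{\zeta_0}$ at time $0$. First I would set $\zeta_0$ arbitrary in $Y$ and $\eta_0=\phi(\zeta_0)$, so that by the discussion around \eqref{trans} the true particles of $\eta_0$ sit at the sites $\gamma_{\zeta_0}(k_i)$, in bijection (via $\gamma_{\zeta_0}|_{K_{\zeta_0}}$) with the ASEP particles at the $k_i$. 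I would then let $\zeta_t$ evolve as the ASEP under these SAPPs, and $\eta_t$ evolve as the (modified, hence genuine) F-ASEP under the same SAPPs.

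The heart of the argument is an invariant maintained for all $t$: the statement that $\eta_t$ is a translate of $\phi(\zeta_t)$, which by \eqref{trans} is equivalent to saying that there is an ordered enumeration $(k'_i)$ of the true-particle sites of $\eta_t$ with $k'_{i+1}-k'_i=2(k_{i+1}(t)-k_i(t))-1$, where $(k_i(t))$ enumerates the ASEP particles of $\zeta_t$. I would prove this invariant by checking that it is preserved across each Poisson event, arguing jump-by-jump. At a time $t^{(i,r)}_j$, an ASEP right-jump of the particle at site $i$ (legal iff $\zeta_{t-}(i)=1$, $\zeta_{t-}(i+1)=0$) must correspond to a legal modified-F-ASEP right-exchange of the matched true particle (legal iff $\eta_{t-}\rng{i'}{i'+2}=1\,1\,0$, where $i'=\gamma$-image of $i$), and the resulting displacements must agree under the linear relation $k'\mapsto 2k'-\text{(const)}$. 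The main obstacle, and the step demanding the most care, is exactly this \textbf{legality matching}: I must show that an ASEP jump is enabled precisely when the corresponding F-ASEP exchange is enabled, and conversely that every enabled F-ASEP exchange corresponds to an enabled ASEP jump. Under the $0\to10$, $1\to1$ substitution an ASEP hole becomes a spacer $1\,0$, so a true particle followed by a hole-pattern $1\,0$ is exactly the local picture $1\,1\,0$, and the empty site to the right of an ASEP particle corresponds to the $1\,0$ spacer that the true particle exchanges into; the left-jump case is symmetric. The combinatorial bookkeeping of how the spacers shift, and the verification that no \emph{spurious} F-ASEP exchange (one involving a non-true particle or mismatched sites) can be triggered, is where the verification must be done carefully.

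Having established that the invariant is preserved at each individual Poisson time, I would then invoke the Harris/Poisson graphical construction to upgrade this to a statement for all $t$ simultaneously. The subtlety flagged in \crem{construction}---that the Poisson times are a.s.\ dense and so cannot be ordered---means I cannot literally ``process jumps in temporal order'' globally. The standard resolution, which I would follow, is to work in a finite window and exploit that, almost surely, there exist pairs of adjacent empty sites (equivalently, in the ASEP picture, appropriate boundary structure) isolating any fixed finite interval for a while, so that on that interval only finitely many relevant Poisson events occur in any bounded time and the jump-by-jump argument applies; one then takes the window to infinity. This is the same device already used in \clem{limit2}, and the general well-definedness cited from \cite[Sections 4.3--4.4]{Swart} guarantees both processes are well defined on $\Omega$. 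Finally, since the modified dynamics was shown (immediately before the theorem) to be identical to the original F-ASEP dynamics, $\eta_t$ is the genuine F-ASEP process with $\eta_0=\phi(\zeta_0)$, and $\zeta_t$ is by construction the ASEP process with initial data $\zeta_0$; together with the invariant, this yields the claim that $\eta_t$ is a translate of $\phi(\zeta_t)$ for all $t$.
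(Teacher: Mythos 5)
Your construction contains a genuine gap, and it sits exactly where you locate the ``heart'' of the argument in the wrong place. You want the same Poisson clock to drive both the ASEP jump of the particle at site $i$ and the exchange of its \emph{matched} true particle, which sits at a different site $i'=\gamma_{\zeta_t}(i)$ that moves with time. But then the clocks are effectively attached to the \emph{particles} and travel with them; this is no longer the site-attached SAPP framework of \eqref{OmegaP}--\eqref{SAPP}. If instead you take your first paragraph literally --- both processes driven by the same SAPPs, so that the clock at site $i$ triggers the ASEP jump at $i$ \emph{and} the modified F-ASEP exchange at $i$ --- the two triggered moves involve \emph{unmatched} particles (the true particle at site $i$ corresponds to an ASEP particle elsewhere), and the pathwise invariant ``$\eta_t$ is a translate of $\phi(\zeta_t)$'' is destroyed at the first ring; the marginals would each be correct but the coupling would fail. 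The remark preceding the theorem does not rescue this: it shows only that changing \emph{which exchange} a given site's clock triggers leaves the F-ASEP law unchanged; it says nothing about replacing site-attached clocks by particle-attached ones. This is why the paper's proof introduces Particle Associated Poisson Processes $\bigl(\To^{(k,r)},\To^{(k,l)}\bigr)_{k\in K}$ (see \eqref{OmegaPo}), runs the coupled dynamics off those --- whereupon your ``legality matching'' is essentially trivial, since under the invariant an empty site to the right of ASEP particle $k$ \emph{is} the $1\,0$ spacer to the right of true particle $k$ --- and then devotes Appendix~\ref{same} (\clem{equiv}) to the real work: proving that the PAPP-driven processes have the same law as the SAPP-driven ones, via the trajectory-tracking map $\Psi$, the recursive approximations $\psiNn$, and properties \rf1--\rf3. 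Your proposal has no substitute for this distributional equivalence; the map from site clocks to particle clocks depends on the random trajectories, so that $\Psi_*\Pp=\Po$ is a statement requiring proof, not an identification.

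A second, smaller error: your device for making the construction well defined --- isolating a finite window by pairs of adjacent empty sites, as in \clem{limit2} --- is unavailable here. That lemma lives in the low-density regime; in the present setting $\eta_t\in G$, so the F-ASEP configuration \emph{never} has two adjacent empty sites, and the ASEP has no persistent blocking structure either. The paper instead isolates the interval $\IN=[\kn_1,\kn_2]$ by choosing particles $\kn_1,\kn_2\in K$ whose clocks do not ring during $[0,N]$, so that they are immobile and insulate the interior --- an insulation mechanism that only makes sense once the clocks are particle-attached, which again points back to the PAPP framework your proposal skips.
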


\begin{proof} To avoid consideration of irrelevant special cases we will
assume that $K:=K_{\zeta_0}$, the set of initial ASEP particle positions,
is infinite to both the left and right of the origin; this case suffices
for the application we make of the theorem in \cthm{TIS} below (and in
fact other cases are simpler to treat).  We regard $K$ as a set of labels
for the ASEP particles, and keep these labels as the particles move to
different sites.  $K$ also labels the true particles in the F-ASEP
through the map $\gamma_{\zeta_0}$.  We will obtain the coupled dynamics
from a set of {\it Particle Associated Poisson Processes} (PAPPs),
defined on the probability space $(\Omegao,\Pmuhato)$ (compare
\eqref{OmegaP}--\eqref{SAPP}):
 \be\begin{aligned}  \label{OmegaPo}
\Omegao&=Y\times\Omegao_0,\quad\text{with}\quad
 \Omegao_0=\prod_{k\in K}\bigl(\To^{(k,r)}\times\To^{(k,l)}\bigr),\\
 \Pmuhato&=\muhat\times\Po,\quad\text{with}\quad
  \Po=\prod_{k\in K}\bigl(\lambdao^{(k,r)}\times\lambdao^{(k,l)}\bigr),\\
 \To^{(k,\#)}&=\Bigl\{\bigl((k,\t^{(k,\#)}_j)\bigr)_{j=1,2,\ldots}\,\Big|\,
\text{$0<\t^{(k,\#)}_1<\t^{(k,\#)}_2\cdots$,
$\lim_{j\to\infty}\t^{(k,\#)}_j=\infty$}\Bigr\},
 \end{aligned}\ee
 with $\lambdao^{k,r}$ and $\lambdao^{k,l}$ Poisson processes with rates
$p$ and $1-p$, respectively.  We take the initial measure $\muhat$ to be
$\delta_{\zeta_0}$ and write $\Po^{(\zeta_0)}$ rather than
$\Po^{\delta_{\zeta_0}}$.

Given a specific realization of the PAPPs we define the corresponding
space-time configuration
$(\zeta,\eta)=\bigl((\zeta_t(i),\eta_t(i))\bigr)_{t\in[0,\infty),i\in\bbz}$
as follows (see \crem{construction}).  For each $N\in\bbn$ we let
$\IN:=[\kn_1,\kn_2]$ be the minimal interval for which
(i)~$\kn_1,\kn_2\in K$, (ii)~$\kn_1\le0<\kn_2$, and (iii)~no Poisson events
occur for particles $\kn_1$ and $\kn_2$ during the time interval $[0,N]$.
Formally, the particles $\kn_1$ and $\kn_2$ are inactive during the time
interval $[0,N]$ and thus insulate the sites in $\IN$ from outside
influence during this time interval.  Note that $\IN$ exists a.s.~and
that clearly $\IN\subset I^{(N+1)}$ and $\IN\nearrow\bbz$ a.s.  

The next step is to define the space-time configuration $(\zetan,\etan)$
on $[0,N]\times\IN$:
$(\zetan,\etan)=\bigl((\zetan_t(i),\etan_t(i))\bigr)_{t\in[0,N],i\in\IN}$.
The definition is such that $\zetan_0=\zeta_0\big|_{\IN}$ and that the
restriction of $(\zeta^{(N+1)},\eta^{(N+1)})$ to $[0,N]\times\IN$ is
$\zetan$.  Once this is done we define
$(\zeta,\eta)=\lim_{N\to\infty}(\zetan,\etan)$, where of course the limit
exists trivially.

To define $\zetan$ we first specify that particles with labels $\kn_1$ and
$\kn_2$ (in either the ASEP or F-ASEP) stay at their initial position
through time $N$: $\zetan_t(\kn_1)=\zetan_t(\kn_2)=1$ and
$\eta_t(\gamma_{\zeta_0}(\kn_1))=\eta_t(\gamma_{\zeta_0}(\kn_2))=1$ for
$0\le t\le N$.  Next, note that the set of Poisson events
$(k,\t^{(k.\ns)}_j)$ with $\kn_1<k<\kn_2$, $\t^{(k,\ns)}_j \in[0,N]$, and
$\ns=r$ or $l$, is a.s.~finite.  Taking these events in their time
order, we specify that the particles move as follows:

\begin{itemize}

\item for an event $(k,\t^{(k.r)}_j)$: if at time $\t^{(k.r)}_j\!\!\!\!-$
the site to the right of the ASEP particle with label $k$ is empty, then
that particle moves to its right, and the F-ASEP particle with label $k$
exchanges with the $1\,0$ pair on its right (as in the modified F-ASEP
dynamics above), and

\item for an event $(k,\t^{(k.l)}_j)$: if at time $\t^{(k.l)}_j\!\!\!\!-$
the site to the left of the ASEP particle with label $k$ is empty, then
that ASEP particle moves to its left and the F-ASEP particle with label
$k$ exchanges with the $1\,0$ pair to its left.

\end{itemize}

It is clear intuitively that the first and second components of this
process are respectively the ASEP and F-ASEP as defined earlier using the
SAPPs; we give a formal proof of this in \capp{same}.  Moreover, one
checks easily that if $K_{\zeta_t}=(k_i)_{i\in\bbz}$ and the set of true
particles in $\eta_t$ is $(k'_i)_{i\in\bbz}$ then \eqref{trans} is
satisfied, so that $\eta_t$ is a translate of $\phi(\zeta_t)$.
\end{proof}

For the next main result we need a lemma.  Let $\hat S(t)$ and $S(t)$ be
the evolution operators for the ASEP and F-ASEP, defined respectively
earlier in this section and in \csect{models}.

\begin{lemma}\label{preserve}$S(t)$ and $\hat S(t)$ preserve ergodicity.
\end{lemma}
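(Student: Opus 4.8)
The plan is to prove the equivalent assertion that $S(t)$ sends ergodic (equivalently, extremal) measures in $\M(X)$ to ergodic measures, and likewise for $\hat S(t)$; the two arguments are word-for-word the same once the F-ASEP graphical construction is replaced by the ASEP one, so I describe only the case of $S(t)$. Fix an ergodic $\mu\in\M(X)$. The starting point is the identity $\mu S(t)=(\eta_t)_*\Pmu$, where $\eta_t:\Omega\to X$ is the time-$t$ configuration map of the Harris construction of \eqref{OmegaP}--\eqref{SAPP} on $(\Omega,\Pmu)$, with $\Pmu=\mu\times\Pp$; this is immediate from $(\mu S(t))(f)=\mu(S(t)f)=\E_{\Pmu}[f(\eta_t)]$. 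Thus it suffices to show that this pushforward is ergodic.

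I would next isolate two structural facts. First, the construction is translation covariant: writing $T:=\tau\times\sigma$ for the diagonal spatial shift on $\Omega=X\times\Omega_0$, where $\sigma$ relabels the site-indexed SAPPs according to $i\mapsto i+1$, one has $\eta_t\circ T=\tau\circ\eta_t$, since the rates \eqref{rates} are translation invariant. Second, $\Pmu$ is $T$-invariant. Granting that $\Pmu$ is moreover $T$-\emph{ergodic} (the substantive step, treated below), ergodicity of the pushforward follows by the standard factor argument: if $A\subset X$ is $\tau$-invariant then covariance gives $T^{-1}\bigl(\eta_t^{-1}(A)\bigr)=\eta_t^{-1}(A)$, whence $\Pmu\bigl(\eta_t^{-1}(A)\bigr)\in\{0,1\}$, i.e.\ $(\eta_t)_*\Pmu(A)\in\{0,1\}$. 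Hence $\mu S(t)=(\eta_t)_*\Pmu$ is ergodic.

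It remains to establish the $T$-ergodicity of $\Pmu=\mu\times\Pp$. Here $\Pp$ is a product measure over the independent, identically distributed pairs $\lambda_p^{(i,r)}\times\lambda_p^{(i,l)}$ indexed by $i\in\bbz$, and $\sigma$ is precisely the shift of this product; exactly as in the proof of \clem{mixing}, this makes $\Pp$ mixing, in particular weakly mixing, under $\sigma$. Since $\mu$ is ergodic under $\tau$, the classical fact that the product of an ergodic transformation with a weakly mixing transformation is ergodic gives the $T$-ergodicity of $\mu\times\Pp$, which completes the argument.

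The part that needs genuine care is not an estimate but the verification, for a general (not necessarily Bernoulli) ergodic $\mu$, that the delicate infinite-volume construction discussed in \crem{construction} yields a jointly measurable, translation-covariant map $\eta_t$ on $(\Omega,\Pmu)$, so that the factor argument applies verbatim; given the construction of \cite{Swart} this is a formal consequence of the translation invariance of the rates. The only mathematical input beyond this bookkeeping is the product-ergodicity theorem together with the elementary observation---already used for $\Prho$ in \clem{mixing}---that a product measure is mixing under its shift.
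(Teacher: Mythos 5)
Your proposal is correct and follows essentially the same route as the paper: both rest on the identity $\mu S(t)=\eta_{t*}\Pmu$ with $\Pmu=\mu\times\Pp$, the fact that the product of an ergodic system with a weakly mixing one is ergodic, and the observation that a translation-covariant image of an ergodic measure is ergodic (you merely unpack this last point, stated as observation (i) in the paper, into the explicit factor argument with $T=\tau\times\sigma$). No gaps; your added care about the covariance and measurability of the Harris construction is consistent with the paper's appeal to \cite{Swart}.
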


\begin{proof}We prove the lemma for $S(t)$; the proof for $\hat S(t)$ is
the same.  The lemma follows from two elementary observations: (i) a
covariant image of an ergodic measure is ergodic (for our purposes here,
a covariant image of a measure $P$ is a measure $f_*P$, where $f$
commutes with translations); and (ii)~the product of an ergodic dynamical
system with one that is weakly mixing is ergodic.  (i) is trivial; (ii)
is a well-known fact that the reader can easily verify (or find in
\cite{Hochman}).

The lemma then follows from the observation that for any measure $\mu$ on
$X$ we have that $\mu S(t)=\eta_{t*}\Pmu$ (see \eqref{OmegaP}).  (Here it
is irrelevant whether $\eta_t$ is defined on $\Omega$ using the original
jump rule or the modified one.)  Since $\Pmu=\mu\times\Pp$ with the
product measure $\Pp$ mixing and hence weakly mixing, (i) and (ii) imply
that $\mu S(t)$ is ergodic if $\mu$ is.  \end{proof}

 The idea of the proof of the following result is taken from \cite{GLS2}:

\begin{theorem}\label{TIS}(a) For any TI measure $\muhat$ on $Y$,
$\Phi(\muhat)S(t)=\Phi\bigl(\muhat\hat S(t)\bigr).$

 \smallskip\noindent
 (b) $\Phi$ is a bijection of the TIS measures for the ASEP and F-ASEP
systems.  \end{theorem}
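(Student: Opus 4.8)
The plan is to obtain part (b) as a formal consequence of part (a), and to prove (a) by running the coupling of \cthm{coupling} from the translation invariant measure $\Phi(\muhat)$ rather than from a single configuration $\phi(\zeta_0)$. This is justified by \eqref{defPhi}: every $\eta\in G$ is uniquely $\tau^a\phi(\zeta)$ with $a=0$ on $G_1$ and $a=-1$ on $G_0=\tau^{-1}G_{10}$, and under $\Phi(\muhat)$ the pair $(a,\zeta)$ carries exactly the phase randomization recorded in \eqref{defPhi}, with $\zeta$ distributed as $\muhat$. Since $S(t)$ commutes with $\tau$, running the F-ASEP from $\Phi(\muhat)$ is a fixed translation average of running it from $\phi_*\muhat$, which \cthm{coupling} controls.

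First I would record what the coupling preserves in time. By \cthm{coupling} and translation covariance, the coupled process obeys $\eta_t=\tau^{A_t}\phi(\zeta_t)$ for a random shift $A_t$ with $A_0=a$, where $\zeta_t$ is the ASEP process driven by the same Poisson processes. Passing to marginals, $\zeta_t$ has law $\nu_t:=\muhat\hat S(t)$ and $\eta_t$ has law $\Phi(\muhat)S(t)$. Because $S(t)$ preserves translation invariance, $\Phi(\muhat)S(t)\in\M(G)$, and since $\Phi:\M(Y)\to\M(G)$ is a bijection it is enough to prove $\Phi^{-1}\bigl(\Phi(\muhat)S(t)\bigr)=\nu_t$; that is, that the F-ASEP evolution transported through $\Phi$ is the ASEP evolution.

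To identify this $\Phi^{-1}$-image I would use that $\Phi^{-1}(\mu)=\mu(G_1)^{-1}\phi^{-1}_*(\mu|_{G_1})$ depends only on $\mu|_{G_1}$, i.e. on the block-content sequence read off from a block boundary anchored at the origin. The defining relation \eqref{trans} of the coupling says the true particles of $\eta_t$ are in gap-preserving bijection with the ASEP particles of $\zeta_t$, so the block sequence read off $\eta_t$ is precisely $\zeta_t$, and conditioning on $G_1$ merely re-anchors this reading at a boundary. The delicate point, which I expect to be the main obstacle, is to show that the random shift $A_t$ does not corrupt this Palm-type reading: the absolute translation of $\eta_t$ must be irrelevant once one conditions on a boundary at the origin, precisely because $\Phi(\muhat)S(t)$ is translation invariant. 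Here \clem{preserve} is the natural tool. I would first treat ergodic $\muhat$, for which (as $\Phi$, $\hat S(t)$ and $S(t)$ all preserve ergodicity) both $\nu_t$ and $\Phi(\muhat)S(t)$ are ergodic and the phase is pinned down by an almost sure frequency, and then extend to arbitrary translation invariant $\muhat$ through the ergodic decomposition, using that $\Phi$ and both semigroups are affine. This yields $\Phi^{-1}\bigl(\Phi(\muhat)S(t)\bigr)=\nu_t=\muhat\hat S(t)$, which is (a).

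Part (b) is then immediate. By (a) we have $\Phi(\muhat)S(t)=\Phi\bigl(\muhat\hat S(t)\bigr)$ for all $t$; hence if $\muhat\hat S(t)=\muhat$ for all $t$ then $\Phi(\muhat)S(t)=\Phi(\muhat)$ for all $t$, while conversely if $\Phi(\muhat)S(t)=\Phi(\muhat)$ for all $t$ then $\Phi\bigl(\muhat\hat S(t)\bigr)=\Phi(\muhat)$, and injectivity of $\Phi$ forces $\muhat\hat S(t)=\muhat$. Thus $\muhat$ is stationary for the ASEP exactly when $\Phi(\muhat)$ is stationary for the F-ASEP, and since $\Phi$ is a bijection of $\M(Y)$ onto $\M(G)$ it restricts to a bijection of the respective TIS measures.
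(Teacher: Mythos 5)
Your overall skeleton matches the paper's: you reduce (b) to (a) via the bijectivity of $\Phi$ on TI measures, reduce (a) to ergodic $\muhat$ using \clem{preserve}, and invoke the coupling of \cthm{coupling}. But the step you yourself flag as ``the delicate point'' is a genuine gap, and your proposed resolution of it is not a valid argument. You claim that because $\Phi(\muhat)S(t)$ is translation invariant, conditioning on $G_1$ ``merely re-anchors'' the block reading, so the random shift $A_t$ is harmless. That is precisely the assertion requiring proof, and the naive version of it is false: writing $\eta_t=\tau^{A_t}\phi(\zeta_t)$ with $A_t$ configuration-dependent, the pullback $\phi^{-1}(\eta_t)$ (defined on $\{\eta_t\in G_1\}$) is a translate of $\zeta_t$ by a random, $\zeta_t$-dependent amount, and recentering a translation-invariant law at a configuration-dependent location is a Palm-type (size-biasing) operation that in general changes the law. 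Appealing to ergodicity with the phrase ``the phase is pinned down by an almost sure frequency'' does not supply the missing step; ergodicity by itself does not make such recenterings measure preserving.

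What ergodicity actually buys---and how the paper closes exactly this gap---is the dichotomy that two ergodic measures are either equal or mutually singular. One therefore never needs to control $A_t$; it suffices to exhibit a single nonzero measure dominated by both $\nu_t=\Phi(\muhat)S(t)$ and $\widetilde\nu_t=\Phi\bigl(\muhat\hat S(t)\bigr)$. The paper extracts this from the coupling by choosing any $m$ with $\kappa_t(B_m)>0$, where $\kappa_t$ is the joint law of $(\zeta_t,\eta_t)$ and $B_m=\{(\zeta,\eta)\mid\eta=\tau^m\phi(\zeta)\}$: setting $\lambda_t=\rho\,\pi_{G*}(\ind_{B_m}\kappa_t)$ and $\widetilde\lambda_t=\rho\,\phi_*\pi_{Y*}(\ind_{B_m}\kappa_t)$, one has $\lambda_t\le\rho\,\pi_{G*}\kappa_t=\rho(\phi_*\muhat)S(t)\le\nu_t$, while $\widetilde\lambda_t\le\rho\,\phi_*\bigl(\muhat\hat S(t)\bigr)\le\widetilde\nu_t$, and on $B_m$ the definitions give $\lambda_t=\tau^m_*\widetilde\lambda_t\le\tau^m_*\widetilde\nu_t=\widetilde\nu_t$ by translation invariance of $\widetilde\nu_t$. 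Since $\lambda_t\ne0$, the two ergodic measures cannot be mutually singular, hence are equal; this is (a). Your derivation of (b) from (a) is correct and identical to the paper's, so replacing your Palm-type step by this common-component argument would complete your proof.
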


\begin{proof} (b) is an immediate consequence of (a) and the remark
directly below \eqref{defPhi} that $\Phi$ is a bijection of TI measures,
and clearly it suffices to verify (a) for $\muhat$ ergodic.  Let us write
$\nu_t:=\Phi(\muhat)S(t)$ and
$\widetilde\nu_t:=\Phi\bigl(\muhat\hat S(t)\bigr)$.  Since $S(t)$ and
$\hat S(t)$ preserve ergodicity, as does $\Phi$, $\nu_t$ and
$\widetilde\nu_t$ are ergodic, so that these two measures are either
equal or mutually singular.  Hence to prove their equality it suffices to
find a nonzero measure $\lambda_t$ with $\lambda_t\le\nu_t$ and
$\lambda_t\le\widetilde\nu_t$, where for measures $\alpha,\beta$ we write
$\alpha\le\beta$ if $\alpha(C)\le\beta(C)$ for every measurable set $C$.

It follows from \cthm{coupling} that there exists a process
$(\zeta_t,\eta_t)$ on $Y\times G$ such that $\zeta_t$ and $\eta_t$ are
ASEP and F-ASEP processes, respectively, $\zeta_0$ is distributed
according to $\muhat$, $\eta_0=\phi(\zeta_0)$, and $\eta_t$ a translate
of $\phi(\zeta_t)$ for all $t$.  Let $\kappa_t$ be the measure on
$Y\times G$ giving the distribution of $(\zeta_t,\eta_t)$; then
$\pi_{Y*}\kappa_t=\muhat\hat S(t)$ and
$\pi_{G*}\kappa_t=(\phi_*\muhat)S(t)$, where $\pi_Y$ and $\pi_G$ are the
projections of $Y\times G$ onto its first and second components,
respectively. Let $m\in\bbz$ be such that $\kappa_t(B_m)>0$, where
$B_m=\{(\zeta,\eta)\in Y\times G\mid\eta=\tau^m\phi(\zeta)\}$, and let
$\widetilde\lambda_t=\rho\phi_*\pi_{Y*}(\ind_{B_m}\kappa_t)$ and
$\lambda_t=\rho\pi_{G*}(\ind_{B_m}\kappa_t$), with $\ind_{B_m}$ the
characteristic function of $B_m$.  Then we have
 \be\label{lamtil}
\widetilde\lambda_t \le \rho\phi_*\pi_{Y*}\kappa_t
  =\rho\phi_*(\muhat\hat S(t))\le\Phi(\muhat\hat S(t))=\widetilde\nu_t,
 \ee
 where we have used \eqref{defPhi}, and 
 \be
\lambda_t\le \rho\pi_{G*}\kappa_t=\rho(\phi_*\muhat)S(t)
   \le\Phi(\muhat)S(t)=\nu_t.
 \ee
 But by the definition of $B_m$, \eqref{lamtil}, and the translation
invariance of $\widetilde\nu_t$,
 \be
\lambda_t=\tau_*^m\widetilde\lambda_t
       \le\tau_*^m\widetilde\nu_t=\widetilde\nu_t.
 \ee
 Since $\lambda_t$ is clearly not zero, by the choice of $m$, the result
follows.\end{proof}

Now we can prove our main result:

\begin{proof}[Proof of \cthm{gibbs}]Let $\rhohat=(2\rho-1)/\rho$; it is
known \cite[Theorem~VIII.3.9(a)]{Liggett} that the Bernoulli measure
$\mu^{(\rhohat)}$ is the unique TIS for the ASEP with density $\rhohat$.
\cthm{TIS}(b) then implies that $\Phi(\mu^{(\rhohat)})$ is the unique TIS
measure for the F-TASEP with density $\rho=1/(2-\rhohat)$.  \end{proof}


\begin{remark}\label{nonTI}(a) Explicit formulas for the measure of
\cthm{gibbs} may be easily obtained from \eqref{defPhi}.  For example,
for $\theta\in\{0,1\}^{\{1,\ldots,m\}}$ with $\theta_i+\theta_{i+1}\ge1$
for $1\le i\le m-1$, we have that (again with $\rhohat=(2\rho-1)/\rho$)
 \be
\begin{split}\Phi(\mu^{(\rhohat)})(\{\eta\mid\eta\rng1m=\theta\})&\\
  &\hskip-70pt=(1-\rho)\left(\frac{1-\rho}\rho\right)^{m-1-\sum_i\theta_i}
   \left(\frac{2\rho-1}\rho\right)^{2\sum_i\theta_i+1-m-\theta_1-\theta_m}.
  \end{split}
 \ee
 This formula was previously obtained in \cite{BESS,BES} in the context of the
 symmetric ($p=1/2)$ facilitated process.

 \smallskip\noindent
 (b) The ASEP system also has non-TI stationary states as long as
$p\ne1/2$, that is, as long as there is a true asymmetry \cite[Example
VIII.2.8]{Liggett}.  We conjecture that this is also true for the F-ASEP,
but we do not have a proof.  \end{remark}

 \medskip\noindent
  {\bf Acknowledgments:} The work of JLL was supported by the AFOSR under
award number FA9500-16-1-0037 and Chief Scientist Laboratory Research
Initiative Request \#99DE01COR.  AA was partially supported by Department
of Science and Technology grant EMR/2016/006624 and by the UGC Centre for
Advanced Studies. We thank two anonymous referees for helpful comments.

 \appendix
\section{An equivalence result\label{same}}

The lemma proved in this appendix is essentially a completion of the
proof of \cthm{coupling}, and we will adopt the notation of that proof;
in particular, we let $\zeta_0$ be the initial ASEP configuration and
write $K:=K_{\zeta_0}$.

\begin{remark}\label{sets}It will be convenient to use a representation
of the sample points for the SAPP and PAPP which is different from that
of \eqref{OmegaP}--\eqref{SAPP} and \eqref{OmegaPo}.  If
$\omega\in\Omega_0$ then, from \eqref{SAPP},
$\omega=\bigl((\omega^{(i,r)},\omega^{(i,l)})\bigr)_{i\in\bbz}$, with
$\omega^{(i,\#)}$ a sequence $\bigl((i,t_j^{(i,\#)})\bigr)_{j\in\bbn}$,
and we may identify $\omega$ with a set of labeled (by $r$ or $l$) {\it
Poisson points} in $\bbz\times\bbr_+\times\{r,l\}$:
$\omega\sim\{(i,t_j^{(i,\#)},\#)\}$.  For given $\omega$ we may and do
assume that the Poisson times $t_j^{(i,\#)}$ are all different and all
nonintegral, since this is true $\Pp$-a.s.~(this avoids discussion of
irrelevant special cases).  A similar representation
$\omegao\sim\{(k,\t_j^{(k,\#)},\#)\}$, with $k\in K$, holds for
$\omegao\in\Omegao_0$.  We say that the PAPP point $(k,\t,\#)$ is
{\it located at} $(i,\t)$ if particle $k$ is at site $i$ at time $\t-$.
\end{remark}

\begin{lemma}\label{equiv} The SAPP and PAPP definitions of the ASEP and
F-ASEP processes are equivalent.  \end{lemma}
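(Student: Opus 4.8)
The plan is to realize both constructions as deterministic functions of their driving noise and then to exhibit an a.s.-defined, measure-preserving map $\Psi$ from the PAPP noise space $(\Omegao_0,\Po)$ of \eqref{OmegaPo} to the SAPP noise space $(\Omega_0,\Pp)$ of \eqref{OmegaP} that intertwines them; equality of the two process laws is then a one-line pushforward computation. I would define $\Psi$ through the location rule of \crem{sets}: given $\omegao\sim\{(k,\t,\#)\}$, run the PAPP dynamics to obtain the trajectory, and map each PAPP point $(k,\t,\#)$ to the SAPP point $(i,\t,\#)$ at which it is located, i.e.\ $i$ is the site occupied by particle $k$ at time $\t-$ (for the F-ASEP, replace ``particle $k$'' by ``true particle $\gamma_{\zeta_0}(k)$'', which reaches the same statement via \eqref{trans}). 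By the exclusion constraint each site carries at most one particle at any instant, so the points located at a fixed site and label form a disjoint concatenation, over the occupation intervals of that site, of pieces of the individual particle clocks, and $\Psi(\omegao)$ is thereby a well-defined element of $\Omega_0$. The first claim, that for a.e.\ $\omegao$ the SAPP process built from $\Psi(\omegao)$ equals the PAPP process built from $\omegao$, is then immediate from the jump rules: a located point $(k,\t,r)$ with particle $k$ at site $i$ at time $\t-$ triggers a rightward jump under PAPP exactly when the right neighbor is empty, and this is precisely the SAPP rule for the point $(i,\t,r)$; the case $\#=l$ is symmetric.

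The essential claim is that $\Psi_*\Po=\Pp$, i.e.\ that the reassembled site clocks are independent Poisson processes of rates $p$ and $1-p$. To make this rigorous I would work inside the finite windows $\IN=[\kn_1,\kn_2]$ over $[0,N]$ from the proof of \cthm{coupling}, within which only finitely many Poisson points are relevant a.s.\ and the dynamics is that of a finite continuous-time Markov chain. The argument is an induction on the successive jump times in the window: having determined the trajectory up to a jump time $s$, conditioned on the configuration at $s$ and the labelled clock points consumed so far, memorylessness makes all future increments independent rate-$p$ (resp.\ rate-$(1-p)$) Poisson processes, independent across particles; for each site $i$ and label $\#$ the next located point at $(i,\#)$ is governed by whichever particle currently occupies $i$, and by the superposition and exponential-competition properties this next point is exponential of the correct rate and independent of the corresponding points at other sites. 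Hence the reassembled site clocks have exactly the SAPP law on each inter-jump interval; assembling over all intervals and letting $N\to\infty$ (using $\IN\nearrow\bbz$) gives $\Psi_*\Po=\Pp$. Combining the two claims, the law of the PAPP process equals $(\text{SAPP process})_*\,\Psi_*\Po=(\text{SAPP process})_*\,\Pp$, the law of the SAPP process, which is the asserted equivalence. (Alternatively, within each window one may simply observe that both constructions assign rate $p$ to each admissible rightward exchange and rate $1-p$ to each admissible leftward one, with independent clocks, hence realize the same generator.)

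The hard part will be the apparent circularity: the location of a PAPP point, and therefore the map $\Psi$ itself, depends on the trajectory, which in turn depends on the clocks. I would resolve this exactly as in \cthm{coupling}, defining the process and the assignment of points to sites only up to each successive jump time and invoking the strong Markov property of the Poisson family, so that ``what happens next'' is always driven by fresh, memoryless, independent exponential clocks regardless of the past assignment. Confining the entire argument to the windows $\IN$, where a.s.\ only finitely many points act before time $N$, removes any measure-theoretic subtlety both in defining $\Psi$ and in justifying the inductive conditioning, and the passage $N\to\infty$ is the trivial one already used in the construction.
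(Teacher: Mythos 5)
Your intertwining step is fine, but the central measure-theoretic claim, $\Psi_*\Po=\Pp$, is false as stated, and the direction in which you built $\Psi$ is precisely the reason. The located PAPP points cover only \emph{occupied} space-time: if site $i$ is vacant during a time interval, the reassembled clock at $i$ has no points there at all, and more generally the presence or absence of points at $(i,t)$ under your image measure is determined by the occupancy history, which is itself a function of the same points. Under $\Pp$, by contrast, every site carries a full rate-$p$/rate-$(1-p)$ Poisson pair at all times, independent of everything else; so $\Psi_*\Po$ is supported on a $\Pp$-null set (point configurations with gaps over vacant space-time), its one-site marginals are not even Poisson, and its site clocks are strongly correlated across sites. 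Consequently the pushforward identity $(\text{SAPP process})_*\Psi_*\Po=(\text{SAPP process})_*\Pp$ that your final computation rests on does not follow from what you prove. One might try to repair this by adjoining independent ``filler'' Poisson points on the vacancy set, but then one must show that under $\Pp$, conditionally on the points lying on the particle trajectories, the wasted points form an independent Poisson field on the complement---which is essentially the nontrivial content of the lemma over again, not a routine addendum.

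The paper sidesteps this by mapping in the opposite direction: it defines $\Psi:\Omega_0\to\Omegao_0$, assigning to each particle $k$ exactly the SAPP points of $\omega$ lying on the closure of its space-time trajectory and \emph{discarding} the wasted points---legitimate because the PAPP sample space has no site clocks that need filling. The identity to be proved is then $\Psi_*\Pp=\Po$, and it is established not by a bare strong-Markov assertion but by finite approximants $\psiNn$ which, one jump at a time, replace the future clock of the jumping particle by the future points of its target site, verifying properties \rf1--\rf3 recursively. Note also that your per-jump induction glosses over a genuine conditioning subtlety that the paper's case analysis (ii)--(iv) is there to handle: once you condition on the insulating window ($\JN$, resp.\ $\IN$), the clocks are no longer unconditioned Poisson processes (interior clocks that defined the window are conditioned to ring before time $N$, boundary clocks conditioned not to ring at all), so at each swap one must check that the two clocks being exchanged fall into cases where their conditional laws coincide. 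Your core idea---memorylessness at successive jump times inside a finite window, with the circularity broken by revealing the trajectory only up to each jump---is the right germ, but to close the argument it must be run in the SAPP-to-PAPP direction and with this conditional bookkeeping made explicit.
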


\begin{proof} The proofs for the two processes are completely parallel;
to be definite we will consider the ASEP.  We let $\zeta_t$ and
$\zetao_t$ be respectively the SAPP and PAPP ASEP processes, each with
initial configuration $\zeta_0$.  The probability spaces for these
processes, $(\Omega_0,\Pp)$ and $(\Omegao_0,\Po)$, are defined in
\eqref{OmegaP}, \eqref{SAPP}, and \eqref{OmegaPo}; see also \crem{sets}.
We define a map $\Psi:\Omega_0\to\Omegao_0$ as follows: with the point
$\omega\in\Omega_0$ is associated a well-defined space-time history
$\zeta(\omega)=\bigl(\zeta_t(\omega,i)\bigr)_{(i,t)\in\bbz\times\bbr_+}$,
and hence, for each particle $k\in K$, a well defined space-time
trajectory.  We define $\Psi(\omega)$ by the condition that $(k,t,\#)$ is
a PAPP point of $\Psi(\omega)$ iff $(i,t,\#)$ is a SAPP point of $\omega$
which lies on the closure of this trajectory.  Then clearly
$\zeta_t=\zetao_t\circ\Psi$ for all $t$, and the lemma will follow once
we show that $\Psi$ has the correct distribution, that is, that
$\Psi_*\Pp=\Po$.

 We verify this by defining, for each $N\in\bbn$, a certain approximation
$\PsiN$ of $\Psi$.  As a preliminary, let $\JN=[\jnn_1,\jnn_2]$ denote
the interval defined in parallel with the construction of $\IN$ in the
proof of \cthm{coupling}, but using the SAPP rather than the PAPP and
ensuring that $\JN\supset[-N,N]$: $\JN$ is the minimal interval for which
$\jnn_1,\jnn_2\in K$, $\jnn_1\le-N<N\le\jnn_2$, and no Poisson events
occur in the SAPP process for sites $\jnn_1$ and $\jnn_2$ during the time
interval $[0,N]$.  Let $\AN$ be the set of SAPP points $(i,t,\#)$ of
$\omega$ with $(i,t)\in\JN\times[0,N]$.  $\AN$ is a.s.~finite; we let
$\mN$ denote the number of points in $\AN$, and index these points as
$(i_n,t_n,\#_n)_{n=1,\ldots,\mN}$ with $0<t_1<\cdots<t_{\mN}<N$.  By
convention we take $t_0=0$.

We now construct recursively a sequence
$\bigl(\psiNn\bigr)_{n=0,1,\ldots}$ of maps
$\psiNn:\Omega_0\to\Omegao_0$; $\psiNn(\omega)$ will be independent of
$n$ for $n\ge\mN(\omega)$ and $\PsiN$ will then be defined by
$\PsiN:=\psiNN$.  We first take $\psi^{(N,0)}(\omega)$ to be such that,
for each particle $k\in K$, $(k,\t,\#)$ is a PAPP point of
$\psi^{(N,0)}(\omega)$ if and only if it is a SAPP point of $\omega$.
Suppose then that we have defined $\psi^{(N,n-1)}(\omega)$.  To define
$\psiNn(\omega)$ we suppose first that $n\le\mN(\omega)$, consider the
SAPP point $(i_n,t_n,\#_n)$, and let $i_n'$ denote the target site to
which a particle at site $i_n$ {\it might} jump at time $t_n$:
$i_n'=i_n+1$ if $\#_n=r$ and $i_n'=i_n-1$ if $\#_n=l$.  If (in the SAPP
process) either there is no particle at site $i_n$ at time $t_n-$, or the
target site $i_n'$ is occupied at time $t_n-$, then we define
$\psiNn(\omega)=\psi^{(N,n-1)}(\omega)$.  Otherwise, the particle in the
SAPP process at site $i_n$ at $t_n-$, say particle $k$, jumps to site
$i_n'$ in the SAPP process, and $\psiNn(\omega)$ is defined to have the
same Poisson points as $\psi^{(N,n-1)}(\omega)$, {\it except} that we
replace the (labeled) times of the PAPP points for particle $k$ which lie
in the future of $t_n$ with the times of the SAPP points for site $i_n'$
which lie in the future of $t_n$: for $\t>t_n$, $(k,\t,\#)$ is a PAPP
point of $\psiNn(\omega)$ if and only if $(i_n',\t,\#)$ is a SAPP point
of $\omega$.  Continuing in this way we define
$\psi^{(N,0)}(\omega),\ldots,\psiNN(\omega)=:\PsiN(\omega)$.  Finally, if
$n\ge\mN(\omega)$ we take $\psiNn(\omega)=\psiNN(\omega)$.

 We next show that the $\psiNn$ satisfy \rf1--\rf3 below:

\begin{enumerate}[(P1)]

\item\label{prop1} For $\omega\in\Omega_0$, $N\in\bbn$, and
$0\le n\le\mN$, the locations of the set of PAPP points $(k,\t,\#)$ of
$\psiNn(\omega)$ with $(k,\t)\in\JN\times[0,t_n]$ coincide with the PAPP
points of $\Psi(\omega)$ satisfying the same restrictions.

\item\label{prop2} For $\omega\in\Omega_0$, $N\in\bbn$, $0\le n\le\mN$,
and particle $k\in\JN$, if $k$ is located at site $i$ at time $t_n$ then
for $\#=l,r$ the locations of the set of PAPP points $(k,\t,\#)$ of
$\psiNn(\omega)$ with $\t>t_n$ coincide with the set of
SAPP points $(i,t,\#)$ of $\omega$ with $t>t_n$.

\item\label{prop3} $\psiNn_*\Pp=\Po$ for all $n$.

\end{enumerate}

\noindent \rf1--\rf3 are trivially satisfied for $n=0$; to verify them
for general $n$ we argue recursively.  

First, \rf1 for $\psi^{(N,n-1)}$ implies that \rf1 holds for $\psiNn$,
except possibly for PAPP points in $\JN\times(t_{n-1},t_n]$.  Since there
are no SAPP points for $\omega$ in $\JN\times(t_{n-1},t_n)$, and hence no
PAPP points for either $\psiNn(\omega)$ or $\Psi(\omega)$ located in this
region, it remains to show that either (i)~no PAPP point for either
$\psiNn(\omega)$ or $\Psi(\omega)$ is located at $(i_n,t_n,\#_n)$, or
(ii)~a PAPP point $(k,t_n,\#_n)$ for both is located there.  It is clear
that (i) holds if no particle is located at $(i_n,t_n-)$.  On the other
hand, if particle $k$ is located at $(i_n,t_n-)$, then certainly
$(k,t_n,\#_n)$ is a PAPP point of $\Psi(\omega)$; moreover, $k$ must also
be located at $(i_n,t_{n-1})$, so that $(k,t_n,\#_n)$ is a PAPP point of
$\psi{(N,n-1)}(\omega)$ from \rf2 for $\psi^{(N,n-1)}(\omega)$, and so
also of $\psiNn(\omega)$, since a jump at time $t_n$ only changes the
PAPP points in the future of $t_n$.

Second, \rf2 for $\psiNn(\omega)$ follows from \rf2 for
$\psi^{(N,n-1)}(\omega)$ and the observation that if particle $k$ jumps
at time $t_n$ then the change in the PAPP points of this particle which
takes place in passing from $\psi^{(N,n-1)}(\omega)$ to $\psiNn(\omega)$
is precisely what is needed to maintain \rf2.

 Finally, we verify \rf3 for $\psiNn$, assuming \rf3 for
$\psi^{(N,n-1)}$.  We will consider conditional measures
$\Pp(\cdot\mid\Q)$, where $\Q$ is specified by certain events and/or
values of certain random quantities, and the family of all such $\Q$'s
forms a partition of $\Omega_0$.  The $\Q$'s which we use will be
specified during the course of the proof.  For each $\Q$ which arises we
will show that
 \be\label{onQ}
 \psiNn_*\Pp(\cdot\mid\Q)=\psi^{(N,n-1)}_*\Pp(\cdot\mid\Q).
 \ee
 Integrating \eqref{onQ} against the marginal $\Pp(d\Q)$ yields
$\psiNn_*\Pp=\psi^{(N,n-1)}_*\Pp$, which with \rf3 for $\psi^{(N,n-1)}$
yields \rf3 for $\psiNn$.  As a first step, let $\Q_0$ be the event that
$\mN\ge n$. On the complimentary event $\Q_0^c$,
$\psiNn=\psi^{(N,n-1)}\bigl(=\psiNN\bigr)$, so that
\eqref{onQ} holds trivially with $\Q=\Q_0^c$.

Next, we let $\Q_1$ be defined by specifying, in addition to $\Q_0$,
values of the interval $\JN$, of the time $t_n$ (which is well-defined on
$\Q_0$), and of the entire past of $t_n$, including in particular the
values of all $(i_{n'},t_{n'},\#_{n'})$ with $n'\le n$.  For $i\in\bbz$
let $S_i$ be the set of Poisson points $(i,t,\#)$ at site $i$ in the
future of $t_n$.  We can describe the joint distribution of the sets
$S_i$ under $\Pp(\cdot\mid\Q_1)$ in terms of the measure $\kappa_u^{(i)}$
defined, for $u\ge0$, to be the translate by $u$ of the measure
$\lambda^{(i,r)}\times\lambda^{(i,l)}$ (see \eqref{OmegaP}): (i)~the
$S_i$, $i\in\bbz$, are independent; (ii)~$S_i$ is distributed as
$\kappa_{t_n}^{(i)}$ if either (ii.a)~$i\notin\JN$, (ii.b)~$i\in[-N,N]$,
(ii.c)~$i\notin K$, or (ii.d)~$i=i_{n'}$ for some
$(i_{n'},t_{n'},\#_{n'})$ with $n'\le n$; (iii)~$S_i$ has no points in
$(t_n,N]$ and on $(N,\infty)$ is distributed as $\kappa_{N}^{(i)}$, if
$i=\jnn_1$ or $i=\jnn_2$; (iv)~$S_i$ is distributed as the conditional
distribution of $\kappa_{t_n}^{(i)}$, given that there is at least one
point in $(t_n,N)$, otherwise.

Now conditioning on $\Q_1$ determines whether or not a jump takes place
at time $t_n$; let $\Q_1'$ and $\Q_1''$ be $\Q_1$ with the additional
restriction that the jump respectively does or does not take place.
Under $\Q_1''$, $\psiNn=\psi^{(N,n-1)}$, so that \eqref{onQ} holds with
$\Q=\Q_1''$.  On the other hand, under $\Q_1'$, some particle $k$ will
jump from site $i_n$ to $i_n'$; let $\Q_2$ be obtained by specifying
$\Q_1'$ together with values of all the sets $S_i$ for $i\ne i_n,i_n'$.
Consider then \eqref{onQ} with $\Q=\Q_2$; the left side of this equation
is obtained from the right by the replacement of the (labeled) times of
the PAPP points for particle $k$ which lie in the future of $t_n$---and,
by \rf2, these are just the times of $S_{i_n}$---with the times of
$S_{i_n'}$ lying in that same future.  But $S_{i_n}$ and $S_{i_n'}$ have
distributions $\kappa_{t_n}^{(i_n)}$ and $\kappa_{t_n}^{(i_n')}$ under
$\Pp(\cdot\mid\Q_1)$ and hence, by the independence noted in (i) above,
under $\Pp(\cdot\mid\Q_2)$; this is because $i_n$ falls under case
(ii.d), and $i_n'$ under either case (ii.c) or case (ii.d), of the
previous paragraph.  Since $\kappa_{t_n}^{(i_n)}$ and
$\kappa_{t_n}^{(i_n')}$ agree, this verifies \eqref{onQ} for $\Q=\Q_2$
and completes the verification of \rf1--\rf3 for $\psiNn$.

 To complete the proof of the lemma, observe that \rf1, together with the
fact that there are no SAPP points of $\omega$ in $\JN\times(t_{\mN},N]$
and hence no PAPP points of either $\PsiN(\omega)=\psiNN(\omega)$ or
$\Psi(\omega)$ located there, implies that the the set of PAPP points
$(k,\t,\#)$ of $\PsiN(\omega)$ which satisfy $-N\le k\le N$ and
$0\le\t\le N$ coincides with the corresponding set of PAPP points of
$\Psi(\omega)$.  By \rf3, then, the marginal distribution of PAPP points
of $\Psi$ in this region is distributed as the marginal of $\Po$.
Since $N$ is arbitrary, we can conclude that $\Psi_*\Pp=\Po$.
\end{proof}


\begin{thebibliography}{99} \bibitem{bbcs} J. Baik, G. Barraquand, I.
Corwin, and T. Suidan,  Facilitated Exclusion Process. {\it Computation
and Combinatorics in Dynamics, Stochastics and Control}, 1--35, Abel
Symp. {\bf 13}, Springer, Cham, 2018.

\bibitem{BM} Urna Basu and P. K. Mohanty, Active-Absorbing-State Phase
  Transition Beyond Directed Percolation: A Class of Exactly Solvable
  Models.  {\it Phys. Rev. E} {\bf 79}, 041143 (2009).

\bibitem{BESS} Oriane Blondel, Cl\'ement Erignoux, Makiko Sasada, and
Marielle Simon, Hydrodynamic Limit for a Facilitated Exclusion Process.
{\it Annales de l'Institut Henri Poincar\'e, Probabilit\'es et
Statistiques} {\bf 56}, 667714 (2020).

\bibitem{BES} Oriane Blondel, Cl\'ement Erignoux, and Marielle Simon,
Stefan Problem for a Non-Ergodic Facilitated Exclusion Process.  {\it
Probability and Mathematical Physics}
{\bf 2}, 127--178 (2021).

\bibitem{CZ} Dayne Chen and Linjie Zhao, The Limiting Behavior of the
  FTASEP with Product Bernoulli Initial Distribution. arXiv:1801.10612v1
  [math PR].

\bibitem{Oliveira}M\'ario J. de Oliveira,  Conserved Lattice Gas Model with
  Infinitely Many Absorbing States in One Dimension.  {\it Phys. Rev. E} 
  {\bf 71}, 016112 (2005).

\bibitem{gkr}Alan Gabel, P. L. Krapivsky, and S. Redner, Facilitated
  Asymmetric Exclusion.  {\it Phys. Rev. Lett.}  {\bf 105}, 210603
  (2010).

\bibitem{GR} A. Gabel and S. Redner, Cooperativity-Driven Singularities
in Asymmetric Exclusion, {\it J. Stat. Mech.} {\bf 2011}, P06008 (2011).

\bibitem{Georgii} Hans-Otto Georgii, {\it Canonical Gibbs Measures,}
Lecture Notes in Mathematics 760. Springer, Berlin, 1979.

\bibitem{GLS1} S. Goldstein, J. L. Lebowitz, and E. R. Speer, Exact
Solution of the F-TASEP . J. Stat. Mech. 123202 (2019).

\bibitem{GLS2} S. Goldstein, J. L. Lebowitz, and E. R. Speer, The
Discrete-Time Facilitated Totally Asymmetric Simple Exclusion Process.
{\it Pure Appl. Funct. Anal.} {\bf 6}, 177203 (2021).

\bibitem{GLS3} S. Goldstein, J. L. Lebowitz and E. R. Speer, Stationary
States of the One-Dimensional Discrete-Time Facilitated Symmetric
Exclusion Process.  In preparation.

\bibitem{hl}Daniel Hexner and Dov Levine,  Hyperuniformity of
Critical Absorbing States. {\it Phys. Rev. Lett.} {\bf 114}, 110602
(2015).

\bibitem{Hochman} Michael Hochman, {\it Notes on Ergodic Theory}.  {\tt
math.huji.ac.il/ courses/ergodic-theory-2012/notes.final.pdf}.

\bibitem{ISS} Takashi Imamura, Tomohiro Sasamoto, and Herbert Spohn, KPZ,
ASEP and Delta-Bose Gas. {\it J. Phys.: Conf. Ser.} {\bf297} 012016
(2011).

\bibitem{Kallenberg} Olav Kallenberg, {\it Foundations of Modern
Probability, Second Edition.} Springer, New York, 2002.


\bibitem{Liggett}Thomas M. Liggett, {\it Interacting Particle Systems.}
Springer-Verlag, New York, 1985.

\bibitem{mcl}Stefano Martiniani, Paul M. Chaikin, and Dov Levine,
Quantifying Hidden Order out of Equilibrium. {\it Phys. Rev. X} {\bf 9},
011031 (2019).

\bibitem{rpv}Michela Rossi, Romualdo Pastor-Satorras, and Alessandro
Vespignani, Universality Class of Absorbing Phase Transitions with a
Conserved Field. {\it Phys. Rev.  Lett.} {\bf85}, 1803 (2000).

\bibitem{ST}Vladas Sidoravicius and Augusto Teixeira, Absorbing-state
Transition for Stochastic Sandpiles and Activated Random Walks. {\it
Electron.  J. Probab.} {\bf 22}, no. 33, 1–35 (2017).

\bibitem{Spohn} Herbert Spohn, {\it Large Scale Dynamics of Interacting
Particles.}  Springer-Verlag, Berlin, 1991.

\bibitem{Stanley} Stanley, Richard P. {\it Catalan Numbers.} Cambridge
University Press, Cambridge, 2015.

\bibitem{Swart} J. M. Swart, {\it A Course in Interacting Particle
Systems.} arXiv:1703.10007v2 (2020).

\bibitem{Van} P. Vanheuverzwijn, A Note on the Stochastic Lattice Gas
  Model. {\it J. Phys.} {\bf A14}, 1149-1158 (1981). 

\end{thebibliography}
\end{document}